\author{Mireille Boutin}
  \address{Department of Mathematics and Computer Science, Technische Universiteit Eindhoven, P.O.~Box 513, 5600 MB Eindhoven, Netherlands and Department of Mathematics, Purdue University, 150 N.~University St., West Lafayette IN 47907} 
\email{m.boutin@tue.nl}
\author{Gregor Kemper} \address{Technische Universit\"at M\"unchen,
  Department of Mathematics, Boltzmannstr. 3, 85748 Garching, Germany}
\email{kemper@tum.de}
\title[Path Tracking Using Echoes]{Path Tracking using Echoes in an Unknown Environment: the Issue of Symmetries and How to Break Them}
\date{\today}
\subjclass[2020]{51K99, 51-08, 70E60}
\keywords{localization, SLAM, Cayley-Menger matrix, path reconstruction, acoustic SLAM, Euclidean symmetries, hyperplane arrangements}
\begin{document}

\begin{abstract}
  This paper deals with the problem of reconstructing the path of a vehicle in an unknown environment consisting of planar structures using sound. 
    Many systems in the literature do this by using a loudspeaker and microphones mounted on a vehicle. 
  Symmetries in the environment lead to solution ambiguities for such systems.
  We propose to resolve this issue by placing the loudspeaker at a fixed location in the environment rather than on the vehicle. The question of whether this will remove ambiguities regardless of the environment geometry leads to a question about breaking symmetries that can be phrased in purely mathematical terms.
  We solve this question in the affirmative if the geometry is in dimension three or bigger, and give counterexamples in dimension two. Excluding the rare situations where the counterexamples arise, we also give an affirmative answer in dimension two. Our results lead to a simple path reconstruction algorithm for a vehicle carrying four microphones navigating within an environment in which a loudspeaker at a fixed position emits short bursts of sounds. This algorithm could be combined with other methods from the literature to construct a path tracking system for vehicles navigating within a potentially symmetric environment. 
\end{abstract}

\maketitle

\section*{Introduction} \label{sIntro}
Several systems have been proposed to use sound to track the path of a vehicle in an unknown environment (e.g. [\citenumber{krekovic2016echoslam},\citenumber{dumbgen2022blind},\citenumber{saqib2020model}]). In order to track the vehicle, the geometry of the environment must be at least partly reconstructed as the vehicle navigates within it. Thus we are talking about the problem of Simultaneous Localization and Mapping (SLAM), in which the path of a user is determined while the shape and position of obstacles and other physical structures in the environment is reconstructed. See for example the book by \mycite{durrant2006simultaneous} for a general introduction to the SLAM problem. Our focus in this paper is on doing so using sound. More specifically, we are interested in acoustic SLAM (aSLAM) where an omnidirectional loudspeaker is used to produce a short burst of sound and microphones capture the echoes of this sound as it bounces on the objects in the environment. Our main interest is the correct reconstruction of the path of the vehicle, not a precise reconstruction of all the details of the environment. However, our results could provide the basis for a system to perform the latter task as well.

We consider the problem of path tracking inside an environment in $\RR^n$. The case of $n=3$ is of most interest. For example a vehicle rolling on the ground of a house would hear the echoes reflected by floors, ceilings and walls in $\RR^3$. So even though the path of the vehicle may be restricted to a 2D floor in this case, the overall problem involves the (partial) reconstruction of a 3D environment. The 2D case is also of considerable interest. For instance, some obstacle detection systems reduce the problem to two dimensions (e.g., the Crazyflie drone and the e-puck robot in [\citenumber{dumbgen2022blind}]).

When there are echoes from more than one surface, it is a priori unclear what echo comes from which surface. 
The task of assigning a surface to an echo is called ``echo sorting" and is a current problem of interest (see for example [\citenumber{macwilliam2023simultaneous}]). In a 3D environment consisting of planar surfaces, the echoes corresponding to different surfaces can be sorted when each surface is ``heard" by at least four microphones in a known geometric configuration [\citenumber{Boutin:Kemper:2019}]. One popular algorithm  [\citenumber{DPWLV1}] uses five microphones. In this paper we are using four microphones.

Systems that use fewer than four microphones 
but some additional sensors have also been developed. 
For example, a smartphone equipped with a Visual-Inertial Odometry (VIO) unit is used in [\citenumber{shih2019can}]. Another example is BatMapper [\citenumber{zhou2017batmapper}], which combines the cell phone audio sensing capability with a gyroscope and accelerometer. 
But systems based on a vehicle carrying a speaker and a single microphone, and no other sensors, have been shown to lead to ambiguities in the reconstruction [\citenumber{krekovic2016look}, \citenumber{krekovic2020shapes}].

In this paper we highlight the fact that, with any number of microphones, path ambiguities are unavoidable if the loudspeaker is carried on the vehicle.  
These ambiguities stem from symmetries in the environment. For example, a vehicle situated in a rectangular room would be unable to determine in which corner of the room it is situated based on the geometry of the surfaces around it. This is illustrated in \cref{fRectangle}.

\begin{figure}[htbp]
      \centering
      \begin{tikzpicture}[scale=0.2]
        \draw[thick] (-8,-5)--(8,-5);
        \draw[thick] (-8,5)--(8,5);
        \draw[thick] (-8,-5)--(-8,5);
        \draw[thick] (8,-5)--(8,5);
        
        \draw[ultra thick,gray] (-5,-2)--(-4,-2);
        \draw[ultra thick,gray] (-5,-2)--(-6,-3);
        \draw[ultra thick,gray] (-5,-2)--(-6,-1);
        \draw[ultra thick,gray] (-4,-2)--(-3,-3);
        \draw[ultra thick,gray] (-4,-2)--(-3,-1);
        \filldraw[red] (-4.5,-2) circle (0.2);
        \filldraw[red] (-6,-3) circle (0.2);
        \filldraw[red] (-3.5,-2.5) circle (0.2);
        \filldraw[red] (-6,-3) circle (0.2);
        \filldraw[red] (-17/3,-4/3) circle (0.2);
        \filldraw[blue] (-3,-1) circle (0.2);
        
        \draw[ultra thick,gray] (5,2)--(4,2);
        \draw[ultra thick,gray] (5,2)--(6,3);
        \draw[ultra thick,gray] (5,2)--(6,1);
        \draw[ultra thick,gray] (4,2)--(3,3);
        \draw[ultra thick,gray] (4,2)--(3,1);
        \filldraw[red] (4.5,2) circle (0.2);
        \filldraw[red] (6,3) circle (0.2);
        \filldraw[red] (3.5,2.5) circle (0.2);
        \filldraw[red] (6,3) circle (0.2);
        \filldraw[red] (17/3,4/3) circle (0.2);
        \filldraw[blue] (3,1) circle (0.2);
      \end{tikzpicture}%
  \caption{{\bf Loudspeaker on the vehicle.} The microphones (red) and the loudspeaker (blue) are positioned on the vehicle. In both vehicle positions indicated, the echoes from a sound heard by the microphones will be exactly the same. So the positions are indistinguishable.}
  \label{fRectangle}
\end{figure}
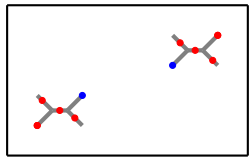

Symmetries in the environment make it mathematically impossible to determine the position of a vehicle carrying its own loudspeaker.
In order to address the issue, we put the loudspeaker in a fixed position in the environment rather than on the vehicle. 
This is illustrated in \cref{fRectangle2}.

\begin{figure}[htbp]
      \centering
      \begin{tikzpicture}[scale=0.2]
        \draw[thick] (-8,-5)--(8,-5);
        \draw[thick] (-8,5)--(8,5);
        \draw[thick] (-8,-5)--(-8,5);
        \draw[thick] (8,-5)--(8,5);

        \draw[dotted] (-2.75,5)--(-2,8);
        \draw[dashed] (-2.75,5)--(-4.5,-2);
        \draw[dashed] (-2.75,5)--(-2,2);

        \draw[dotted] (1.25,5)--(-2,8);
        \draw[dashed] (1.25,5)--(-2,2);
        \draw[dashed] (1.25,5)--(4.5,2);
        
        \filldraw[blue] (-2,2) circle (0.2);
        \filldraw[violet] (-14,2) circle (0.2);
        \filldraw[violet] (18,2) circle (0.2);
        \filldraw[violet] (-2,8) circle (0.2);
        \filldraw[violet] (-2,-12) circle (0.2);
        \filldraw[white] (-18,2) circle (0.2);
        
        \draw[very thick,gray] (-5,-2)--(-4,-2);
        \draw[very thick,gray] (-5,-2)--(-6,-3);
        \draw[very thick,gray] (-5,-2)--(-6,-1);
        \draw[very thick,gray] (-4,-2)--(-3,-3);
        \draw[very thick,gray] (-4,-2)--(-3,-1);
        \filldraw[red] (-4.5,-2) circle (0.2);
        \filldraw[red] (-6,-3) circle (0.2);
        \filldraw[red] (-3.5,-2.5) circle (0.2);
        \filldraw[red] (-6,-3) circle (0.2);
        \filldraw[red] (-17/3,-4/3) circle (0.2);
             
        \draw[very thick,gray] (5,2)--(4,2);
        \draw[very thick,gray] (5,2)--(6,3);
        \draw[very thick,gray] (5,2)--(6,1);
        \draw[very thick,gray] (4,2)--(3,3);
        \draw[very thick,gray] (4,2)--(3,1);
        \filldraw[red] (4.5,2) circle (0.2);
        \filldraw[red] (6,3) circle (0.2);
        \filldraw[red] (3.5,2.5) circle (0.2);
        \filldraw[red] (6,3) circle (0.2);
        \filldraw[red] (17/3,4/3) circle (0.2);
      \end{tikzpicture}%
  \caption{{\bf Loudspeaker at a fixed position.} The sound travels along the dashed lines. Virtually, it comes from the mirror points (violet). The fixed position of the loudspeaker (blue) is such that there is no symmetry among the mirror points. Vehicle positions are distinguishable.}
  \label{fRectangle2}
\end{figure}
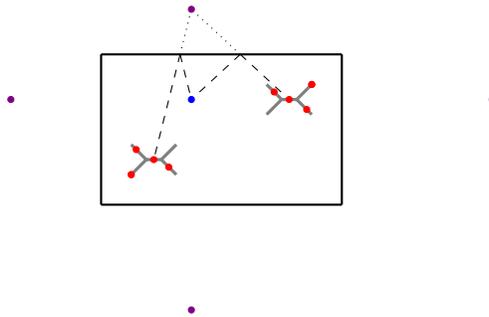

A main result of this paper (\cref{tSymmetries3D,tSymmetries2D}) is that putting the loudspeaker in a generic position in the environment makes the issue of symmetries disappear. This is what we mean by ``breaking symmetries.'' The theorems are phrased in purely mathematical terms. 

Our setup is as follows. A vehicle equipped with four microphones is moving inside an environment (e.g., a room or building) consisting of planar surfaces called ``walls." The vehicle may be flying in the room or moving on the ground. The wall positions are unknown and the microphone geometry is non-planar and known. More specifically, we know precisely the distance between the microphones, but the position of the microphone arrangement in the environment is unknown.
An omnidirectional loudspeaker is placed at a fixed unknown location inside the environment. The loudspeaker emits a short high-frequency signal (at a known time) and the signal bounces off the wall, creating echoes. The microphones listen to the sound of the original signal and the first order echoes to determine the distance to their respective source; the higher order echoes are discarded. We are assuming that the vehicle did not move while it was receiving the echoes, or that the change in position was so small that it can be neglected. We are also assuming that the vehicle was in a generic position at the time of reception. We are interested in reconstructing the path of the vehicle in the environment.

Our work builds on the methods from two previous
papers~[\citenumber{Boutin:Kemper:2019},\citenumber{Boutin:Kemper:2022}]. In [\citenumber{Boutin:Kemper:2019}] the vehicle is a drone with 3D freedom of translation and 3D freedom of rotation. In  [\citenumber{Boutin:Kemper:2022}], the vehicle is restricted to either 3D translation and yaw rotation (e.g., a hovering drone) or to movements on a ground plane (e.g., a car). 
In all cases, the  vehicle knows its own position and is equipped with four microphones: the goal is to determine the positions of the walls. In other words, we assume that the localization problem is solved, and so only the mapping problem remains. 
One problem is  that ``ghost walls,'' i.e., walls
that are detected but do not really exist, may appear. The main result
of~[\citenumber{Boutin:Kemper:2019}] is that ghost walls are only possible for few vehicle positions given full freedom of motion in 3D (translation and rotation) regardless of the wall positions. When the vehicle motion restricted, we showed in [\citenumber{Boutin:Kemper:2022}] that a few wall positions will lead to ghost walls being detected.

Having solved the problem of reconstructing the wall positions when the position of the vehicle is known, we now turn to the problem of determining the path of the vehicle in this paper. The methods of [\citenumber{Boutin:Kemper:2019},\citenumber{Boutin:Kemper:2022}] can be used to try to recover the geometry of the environment
in an arbitrary coordinate system (e.g., a local coordinate system for the microphone positions on the drone.) This can be repeated at various locations along the path of the vehicle. Since the coordinate system in which the wall positions are expressed will vary from one location to the next, it will be necessary to transform them to a common coordinate system; finding the transformations to the common coordinate system is equivalent to determining the path of the vehicle in that system. Determining this change of coordinate is the crux of the problem.

\cref{aLocate} lays out a procedure to do this. The algorithm is stated assuming that the environment has a 3D geometry, but it could be easily adapted to 2D. It uses ``mirror points,'' which are reflections of the loudspeaker position with respect to walls, as shown in \cref{fRectangle2}. Some of these mirror points, possibly together with the loudspeaker position, are located by the algorithm every time that the loudspeaker emits a sound burst. Together, we think of the mirror points and the loudspeaker position as ``sound sources.'' The idea is to match the detected sound sources to sound sources that have been detected from previous sound bursts, while the vehicle was at different positions. If enough sound sources can be matched, the current position of the vehicle is computed. This also yields the current attitude of the vehicle in terms of its principal axes. As the vehicle moves along its path, the algorithm builds a collection of detected sound sources. From these it is possible to determine the actual geometry of the walls, but we did not include this step into the formulation of the algorithm.  We view \cref{aLocate} more as a proof-of-concept than a ready-to-use procedure, as the ideas behind it can be combined with other methods 
to improve applicability, accuracy and efficiency. 
For example, the technical issues arising from the task of determining arrival times of echoes 
have been left out and could be addressed with existing methods from the literature (e.g., [\citenumber{cao2020effective}]). Also, the matching procedures could be improved by utilizing more sophisticated SLAM techniques such as graph-based SLAM [\citenumber{grisetti2010tutorial}].

\section{Problem Setup}
\newcommand{\ve}[1]{\mathbf{#1}}%

Consider an environment consisting of finite, planar surfaces in $\RR^n$.
A vehicle equipped with four non-coplanar microphones is moving inside the environment.
An omnidirectional loudspeaker is placed at a position in $\RR^n$ which need not be known.
The speaker produces a series of short high-pitch sounds. The times of the sound emissions are known. We are assuming that the vehicle is not moving while it is receiving the echoes of the different walls. The task at hand is to determine the position and orientation of the vehicle at those moments  where it receives the sound. Thus, we reconstruct the movements of the vehicle at discrete moments along the path.   

Each sound impulse bounces on some of the surfaces and is heard by some microphones.  Using the ray acoustic model, any surface that reflects a sound impulse can be represented by a ``{\bf mirror point}" (see \cref{fRectangle2}). The mirror point of a wall is the reflection of the loudspeaker position  with respect to the plane defined by the wall. This point can be viewed as a virtual source of sound. The set of all {\bf sound sources} for a given emission thus consists of the loudspeaker combined with all the virtual sound sources. This is a set of points whose geometry plays an important role in the following.


\section{Breaking symmetries} \label{sSymmetries}

\newcommand{\Aff}{\operatorname{Aff}}%
\newcommand{\brank}{\operatorname{b-rank}}
\newcommand{\re}{\operatorname{ref}}%


As explained in the introduction, we wish to put our loudspeaker in a
position such that the mirror points do not display any symmetry, even
if the environment geometry does. An example is shown in \cref{fRectangle2}.
The aim in this section is to show that regardless of the environment geometry,
this can be achieved by a generic choice of the loudspeaker position.
This section is phrased in purely mathematical terms. In particular, the
``environment'' is now treated as a finite set of hyperplanes in $\RR^n$.

A hyperplane arrangement, given by a finite set of affine hyperplanes
in $\RR^n$, can have symmetries, i.e., nonidentity elements of the
Euclidean group permuting the hyperplanes. Symmetries may be
``broken'' by choosing a point in $\RR^n$ (the loudspeaker position 
in our application) and then considering all
reflections of that point in the hyperplanes (the ``mirror points''),
instead of considering the hyperplane arrangement itself. Can the point
be placed such that all symmetries are broken, and no additional 
symmetries arise between the reflected points? Looking at situations 
such as the one shown in \cref{fRectangle2}, one might expect so, 
but at least in dimension two, the answer is {\em not in general}, as
\cref{rSymmetries}\ref{rSymmetriesC} below shows. However, the
following result, \cref{tSymmetries3D}, gives a positive answer in the
case of dimension~$\ge 3$ (\cref{rSymmetries}\ref{rSymmetriesB} makes
this precise). \cref{tSymmetries2D} then deals with the
two-dimensional case, thus qualifying the observation that breaking
symmetries is in general not possible. The results say that a generic
choice of point breaks all symmetries. In both theorems, the ``no
symmetries'' statement is made in a strong way: the set of reflections
of the chosen point has no nonidentity isometry. Neither do there
exist isometries between subsets of reflection points, as long as
those subsets are geometrically ``large enough.''

In the following, $\re_H(\ve v)$ denotes the reflection of a point
$\ve v \in \RR^n$ in an affine hyperplane $H \subset \RR^n$. The 
point~$\ve v$ can be thought of as the loudspeaker position, and the
$\re_H(\ve v)$ as the mirror points. In our application, $\ve v$ together 
with the $\re_H(\ve v)$ forms the set of sound sources.

\begin{theorem} \label{tSymmetries3D}%
  Let $\mathcal H$ be a finite set of affine hyperplanes in
  $\RR^n$. Then there is a nonzero polynomial
  $f \in \RR[x_1 \upto x_n]$ such that for all $\ve v \in \RR^n$ with
  $f(\ve v) \ne 0$ and for
  $H_1 \upto H_m,H'_1 \upto H'_m \in \mathcal H$ such that the normal
  vectors of $H_1 \upto H_m$ span a vector space of dimension $\ge 3$,
  we have: if the reflections $\ve w_i := \re_{H_i}(\ve v)$,
  $\ve w'_i := \re_{H'_i}(\ve v)$ of~$\ve v$ satisfy
  \[
    \lVert\ve w_i - \ve w_j\rVert = \lVert\ve w'_i - \ve w'_j\rVert
    \quad (1 \le i < j \le m),
  \]
  then $H_i = H'_i$ and therefore $\ve w_i = \ve w'_i$ for
  all~$i$. Moreover, for $H_1,H_2,H_3 \in \mathcal H$ with $H_1 \ne
  H_2$ we have
  \begin{equation} \label{eqIneq3}%
    \bigl\lVert\re_{H_1}(\ve v) - \ve v\bigr\rVert \ne
    \bigl\lVert\re_{H_2}(\ve v) - \ve v\bigr\rVert \ne
    \bigl\lVert\re_{H_1}(\ve v) - \re_{H_3}(\ve v)\bigr\rVert.
  \end{equation}
  In other words, each distance between~$\ve v$ and a reflection point
  is unique among the distances between~$\ve v$ and reflection points
  and distances between two reflection points.
\end{theorem}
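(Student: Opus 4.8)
\textit{Sketch of the intended proof.}
The plan is a genericity argument: I will show that the set of ``bad'' points~$\ve v$, those for which some part of the conclusion fails, is contained in the zero set of a nonzero polynomial, and take~$f$ to cut out that set. Everything is polynomial in~$\ve v$: writing each $H\in\mathcal H$ as $\{\ve x:\langle\ve x,\ve n_H\rangle=c_H\}$ with $\lVert\ve n_H\rVert=1$ (so $(\ve n_H,c_H)$ is determined up to a common sign), the reflection
\[
  \re_H(\ve v)=R_H\,\ve v+2c_H\ve n_H,\qquad R_H:=I-2\,\ve n_H\ve n_H^{\mathsf T},
\]
is affine in~$\ve v$, so each squared distance $\lVert\re_H(\ve v)-\re_K(\ve v)\rVert^2$ is a polynomial of degree $\le 2$. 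First I would reduce to the case that $H_1,\ldots,H_m$ are pairwise distinct, so that only finitely many index patterns $T=(H_1,\ldots,H_m,H'_1,\ldots,H'_m)$ of members of~$\mathcal H$ arise (a repetition $H_i=H_j$ forces $\ve w_i=\ve w_j$, hence $\ve w'_i=\ve w'_j$, and after enlarging~$f$ by finitely many linear factors this collapses to a set of representatives). For each such~$T$ whose normals $\ve n_1:=\ve n_{H_1},\ldots,\ve n_m:=\ve n_{H_m}$ span dimension $\ge 3$, set $P^T_{ij}(\ve v):=\lVert\re_{H_i}(\ve v)-\re_{H_j}(\ve v)\rVert^2-\lVert\re_{H'_i}(\ve v)-\re_{H'_j}(\ve v)\rVert^2$. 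If $P^T_{ij}\not\equiv 0$ for some $i<j$, its zero set is proper and I absorb $P^T_{ij}$ into~$f$; so the first assertion reduces to the claim: \emph{if $P^T_{ij}\equiv 0$ for all $1\le i<j\le m$ and $\ve n_1,\ldots,\ve n_m$ span dimension $\ge 3$, then $H_i=H'_i$ for all~$i$.}

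To prove the claim I would compare homogeneous components in~$\ve v$. The degree-$2$ part of $P^T_{ij}$ is the quadratic form with matrix $(R_{H_i}-R_{H_j})^2-(R_{H'_i}-R_{H'_j})^2$, and the structural fact I will use is that $R_H-R_K=2(\ve n_K\ve n_K^{\mathsf T}-\ve n_H\ve n_H^{\mathsf T})$ is a symmetric matrix vanishing exactly when $\ve n_H\parallel\ve n_K$, and otherwise of rank~$2$ with image $\operatorname{span}(\ve n_H,\ve n_K)$; the same then holds for its square. Hence vanishing of every degree-$2$ part says that the parallelism pattern and the spanned planes of the primed normals agree with those of the unprimed ones. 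Using the dimension-$\ge 3$ hypothesis I would fix $\ve n_{i_0},\ve n_{j_0},\ve n_{k_0}$ linearly independent and, for each index~$l$, recover $\ve n'_l=\pm\ve n_l$ by pinning $\ve n'_l$ down as the intersection of two of the matched two-planes through~$\ve n_l$; in particular $R_{H'_l}=R_{H_l}$. I expect this plane-intersection bookkeeping to be the main obstacle: there are several degenerate sub-cases (e.g.\ $\ve n_l$ proportional to one of $\ve n_{i_0},\ve n_{j_0},\ve n_{k_0}$), and it is precisely here that the hypothesis $\ge 3$ is indispensable --- for $n=2$ every non-parallel pair of normals spans all of $\RR^2$, the argument carries no information, and indeed the analogous statement fails (cf.\ the introduction).

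With $R_{H'_l}=R_{H_l}$ for all~$l$, the maps $\ve v\mapsto\re_{H_i}(\ve v)-\re_{H_j}(\ve v)$ and $\ve v\mapsto\re_{H'_i}(\ve v)-\re_{H'_j}(\ve v)$ have the same linear part, so their difference is a constant vector $\ve c_{ij}$, and it lies in $\operatorname{span}(\ve n_i,\ve n_j)$ since it is a combination of $\ve n_i$ and $\ve n_j$. Writing $P^T_{ij}(\ve v)=\langle\ve c_{ij},\,\re_{H_i}(\ve v)-\re_{H_j}(\ve v)+\re_{H'_i}(\ve v)-\re_{H'_j}(\ve v)\rangle$, vanishing of its degree-$1$ part forces $(R_{H_i}-R_{H_j})\ve c_{ij}=0$, i.e.\ $\ve c_{ij}\in\operatorname{span}(\ve n_i,\ve n_j)^\perp$ whenever $\ve n_i\not\parallel\ve n_j$; hence $\ve c_{ij}=0$, and since every index occurs in such a pair (the normals span dimension $\ge 2$), this pins down the offsets $c_{H'_l}$ up to the matching sign, giving $H'_l=H_l$. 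This finishes the claim, hence the first assertion. Finally, for~\eqref{eqIneq3} I would note that $\lVert\re_{H_1}(\ve v)-\ve v\rVert^2-\lVert\re_{H_2}(\ve v)-\ve v\rVert^2=4(d_1-d_2)(d_1+d_2)$ with $d_k:=\langle\ve n_{H_k},\ve v\rangle-c_{H_k}$, which is a nonzero polynomial precisely because $H_1\ne H_2$; and that the degree-$2$ part of $\lVert\re_{H_2}(\ve v)-\ve v\rVert^2-\lVert\re_{H_1}(\ve v)-\re_{H_3}(\ve v)\rVert^2$ has matrix $4\,\ve n_{H_2}\ve n_{H_2}^{\mathsf T}-(R_{H_1}-R_{H_3})^2$, which is nonzero by a rank comparison ($\le 1$ versus $2$, unless the subtracted matrix vanishes). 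Letting $f$ be the product of all these finitely many nonzero polynomials completes the plan.
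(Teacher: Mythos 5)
Your proposal is correct, and while it shares the paper's overall architecture --- take $f$ to be a product of finitely many not-identically-zero polynomials, one per ``bad'' configuration of hyperplanes, so that $f(\ve v)\ne 0$ excludes every bad configuration at $\ve v$ --- it establishes the key nonvanishing claims by a genuinely different route. The paper argues synthetically: \cref{lSymmetries} evaluates the polynomial identity at special points ($\ve v\in H_1\cap H_2$, where both reflections collapse to $\ve v$), characterizes a hyperplane as the perpendicular bisector of $\ve v$ and $\re_H(\ve v)$, and recovers $H_i=H'_i$ from $H_i\cap H_j=H'_i\cap H'_j$ via $H_1=\Aff\bigl((H_1\cap H_2)\cup(H_1\cap H_3)\bigr)$. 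You instead write $\re_H(\ve v)=(I-2\ve n_H\ve n_H^T)\ve v+2c_H\ve n_H$ and compare homogeneous components: the quadratic part forces $\operatorname{span}(\ve n_i,\ve n_j)=\operatorname{span}(\ve n'_i,\ve n'_j)$ through the rank-two matrices $(R_{H_i}-R_{H_j})^2$, the dimension-$\ge 3$ hypothesis lets you intersect two such planes to get $\ve n'_l=\pm\ve n_l$, and the linear part then kills the offset discrepancy $\ve c_{ij}$, which lies in both $\operatorname{span}(\ve n_i,\ve n_j)$ and its orthogonal complement. The step you flag as the main obstacle does close: since $\ve n_{i_0},\ve n_{j_0},\ve n_{k_0}$ are linearly independent, no nonzero vector lies in all three of their pairwise spans, so every $\ve n_l$ is linearly independent from some pair among them, and the two corresponding matched planes meet exactly in $\operatorname{span}(\ve n_l)$. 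Your rank-comparison proof of \cref{eqIneq3} (rank one versus rank zero or two) likewise replaces \cref{lSymmetries}\ref{lSymmetriesA}. The paper's route yields a coordinate-free lemma that serves \cref{tSymmetries3D} and \cref{tSymmetries2D} simultaneously; yours makes explicit which data (first the normal directions, then the offsets) each homogeneous component of the distance identity determines, at the cost of using the full system of pairwise identities where the paper needs, for each $i$, only one triple $(i,j,k)$ with linearly independent normals.
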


The theorem will be proved together with \cref{tSymmetries2D} below.

\begin{rem} \label{rSymmetries}%
  \begin{enumerate}[label=(\alph*)]
  \item \label{rSymmetriesB} A hyperplane arrangement $\mathcal H$ in
    which all the normal vectors of the hyperplanes are contained in a
    two-dimensional subspace is itself ``morally'' two-dimensional. So
    let us assume that our hyperplane arrangement has three
    hyperplanes with linearly independent normal vectors. Then
    choosing the $\ve w_i$ as all the reflections of~$\ve v$ means
    that the hypothesis of \cref{tSymmetries3D} is met. So if~$\phi$
    is a Euclidean transformation that permutes the~$\ve w_i$, we can
    apply the theorem to the $\ve w_i$ and
    $\ve w'_i := \phi(\ve w_i)$. This yields $\ve w_i = \ve w'_i$,
    implying that~$\phi$ restricts to the identity on the affine space
    generated by $\ve w_1 \upto \ve w_m$. This makes it precise
    that~$\ve w_1 \upto \ve w_m$ have no symmetries.
  \item \label{rSymmetriesC} \newcommand{\rot}{\operatorname{rot}}%
    \cref{tSymmetries3D} says nothing in the case of dimension
    $n = 2$, since in that case no hyperplanes
    $H_1 \upto H_m$ can possibly meet the dimension hypothesis on the
    normal vectors. In fact, the following construction shows that in
    dimension two, breaking symmetries by taking reflections of a
    point is impossible for some arrangements of hyperplanes (which in
    2D are just lines). Let $H_1 \upto H_m \subset \RR^2$ be lines
    through the coordinate origin and let $\rot_{2 \phi} \in \SO_2$ be
    a rotation about an angle $2 \phi$, with~$\phi$ not a multiple
    of~$\pi$. We have
    \begin{equation} \label{eqRotRef}%
      \rot_{2 \phi} \circ \re_{H_i} = \rot_\phi \circ \re_{H_i} \circ
      \re_{H_i} \circ \rot_\phi \circ \re_{H_i} = \rot_\phi \circ
      \re_{H_i} \circ \rot_\phi^{-1} = \re_{\rot_\phi(H_i)},
    \end{equation}
    so it we set $H'_i := \rot_\phi(H_i)$, and, with $\ve v \in \RR^2$
    arbitrary, $\ve w_i := \re_{H_i}(\ve v)$,
    $\ve w'_i := \re_{H'_i}(\ve v)$, then
    \[
      \lVert\ve w_i - \ve w_j\rVert = \lVert\rot_{2 \phi}(\ve w_i) -
      \rot_{2 \phi}(\ve w_j)\rVert \underset{\cref{eqRotRef}}{=}
      \lVert\ve w'_i - \ve w'_j\rVert,
    \]
    but $H_i \ne H'_i$ and $\ve w_i \ne \ve w'_i$. So the assertion of
    \cref{tSymmetries3D} fails.

    To turn this into an example about symmetries, choose
    $\phi = \pi/k$ with $k \ge 3$ an integer, choose a line $H$
    through the origin, and set
    $\mathcal H := \{H_i:= \rot_{i \phi}(H) \mid i = 0 \upto
    k-1\}$. Then the symmetry group of $\mathcal H$ is a dihedral group
    of order $4 k$, generated by $\rot_\phi$ and $\re_H$. For the
    reflections $\ve w_i = \re_{H_i}(\ve v)$, \cref{eqRotRef} yields
    $\ve w_{i+1} = \rot_{2 \phi}(\ve w_i)$, setting
    $\ve w_k := \ve w_0$. So the $\ve w_i$ form a regular polygon
    with~$k$ vertices and dihedral symmetry group of order $2
    k$. \cref{fTriangle} shows this for~$k = 3$.
    
    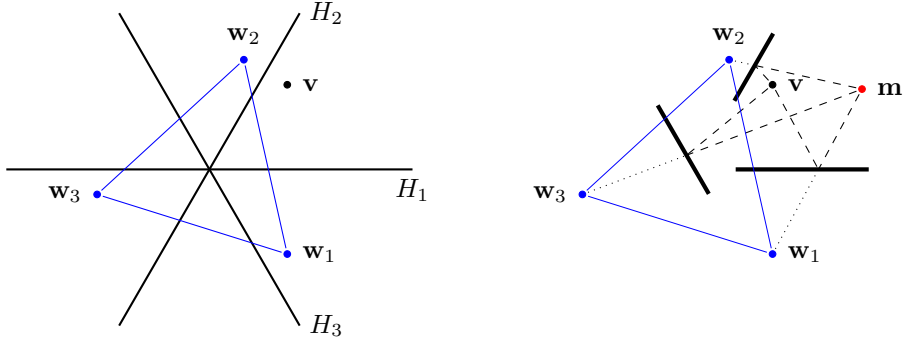
\begin{figure}[htbp]
      \centering
      \begin{tikzpicture}[scale=0.6]
        \draw[thick] (-4.5,0)--(4.5,0) node[below,at end]{$H_1$};%
        \draw[thick] (-2,-3.464)--(2,3.464) node[right,at
        end]{$H_2$};%
        \draw[thick] (-2,3.464)--(2,-3.464) node[right,at
        end]{$H_3$};%
        \node[circle,draw,outer sep=1pt, inner sep=0pt,minimum
        size=2.5pt,fill,label={right:$\ve v$}] (v) at (1.725,1.877) {};%
        \node[blue,circle,draw,outer sep=1pt, inner sep=0pt,minimum
        size=2.5pt,fill,label={right:$\ve w_1$}] (w1) at (1.725,-1.877) {};%
        \node[blue,circle,draw,outer sep=1pt, inner sep=0pt,minimum
        size=2.5pt,fill,label={above:$\ve w_2$}] (w2) at (0.763,2.432) {};%
        \node[blue,circle,draw,outer sep=1pt, inner sep=0pt,minimum
        size=2.5pt,fill,label={left:$\ve w_3$}] (w3) at (-2.488,-0.555) {};%
        \draw[blue] (w1) -- (w2);%
        \draw[blue] (w1) -- (w3);%
        \draw[blue] (w2) -- (w3);%
      \end{tikzpicture}%
      \qquad
      \begin{tikzpicture}[scale=0.6]
        \draw[white] (-4.5,0)--(4.5,0) node[below,at end]{$H_1$};%
        \draw[white] (-2,-3.464)--(2,3.464) node[right,at
        end]{$H_2$};%
        \draw[white] (-2,3.464)--(2,-3.464) node[right,at
        end]{$H_3$};%
        
        \draw[ultra thick] (0.908,0)--(3.856,0);%
        \draw[ultra thick] (0.879,1.522)--(1.739,3.012);%
        \draw[ultra thick] (-0.820,1.420)--(0.313,-0.542);%
        \node[circle,draw,outer sep=1pt, inner sep=0pt,minimum
        size=2.5pt,fill,label={right:$\ve v$}] (v) at (1.725,1.877) {};%
        \node[blue,circle,draw,outer sep=1pt, inner sep=0pt,minimum
        size=2.5pt,fill,label={right:$\ve w_1$}] (w1) at (1.725,-1.877) {};%
        \node[blue,circle,draw,outer sep=1pt, inner sep=0pt,minimum
        size=2.5pt,fill,label={above:$\ve w_2$}] (w2) at (0.763,2.432) {};%
        \node[blue,circle,draw,outer sep=1pt, inner sep=0pt,minimum
        size=2.5pt,fill,label={left:$\ve w_3$}] (w3) at (-2.488,-0.555) {};%
        \node[red,circle,draw,outer sep=1pt, inner sep=0pt,minimum
        size=2.5pt,fill,label={right:$\ve m$}] (m) at (3.707,1.781) {};%
        
        \draw[dashed] (v)--(-0.181,0.314);%
        \draw[dashed] (m)--(-0.181,0.314);%
        \draw[dotted] (w3)--(-0.181,0.314);%
        
        \draw[dashed] (v)--(2.742,0);%
        \draw[dashed] (m)--(2.742,0);%
        \draw[dotted] (w1)--(2.742,0);%
        
        \draw[dashed] (v)--(1.332,2.306);%
        \draw[dashed] (m)--(1.332,2.306);%
        \draw[dotted] (w2)--(1.332,2.306);%
        
        \draw[ultra thin,blue] (w1) -- (w2);%
        \draw[ultra thin,blue] (w1) -- (w3);%
        \draw[ultra thin,blue] (w2) -- (w3);%
      \end{tikzpicture}%
      \caption{{\bf Left:} no matter where~$\ve v$ is, its reflections~$\ve
        w_i$ in the $H_i$ always form an equilateral triangle. {\bf Right:} the same geometry, but now a microphone position (red) has been introduced and the hyperplanes have been reduced to limited walls. This shows that the geometry on the left can be realized in such a way that the echoes can actually be heard.}
      \label{fTriangle}
    \end{figure}
    
    Notice that the symmetry of the triangle that swaps~$\ve w_1$
    and~$\ve w_2$ is new since it is a reflection in a line that is
    not a symmetry axis of the hyperplane arrangement. \remend
  \end{enumerate}
  \renewcommand{\remend}{}
\end{rem}

The next result is a variant of \cref{tSymmetries3D} which does work
in dimension~$2$. It requires an additional hypothesis on the
hyperplane arrangement, which in dimension~$2$ just stipulates that
the intersection of three lines in the arrangement must be empty. So
one might say that precisely the situation from
\cref{rSymmetries}\ref{rSymmetriesC}, shown in \cref{fTriangle}, is
excluded.  The additional hypothesis is mild in the sense that a
generic hyperplane arrangement satisfies it. Comparing
\cref{tSymmetries3D,tSymmetries2D}, one sees that the extra hypothesis
allows to replace the ``dimension $\ge 3$'' in \cref{tSymmetries3D} by
``dimension $\ge 2$.'' Since
$\lVert\ve w_1 - \ve w_2\rVert = \lVert\ve w'_1 - \ve w'_2\rVert$
cannot be enough to conclude $\ve w_i = \ve w'_i$, it is also clear
that the hypothesis $\bigl|\{H_1 \upto H_m\}\bigr| \ge 3$ is needed in
\cref{tSymmetries2D}.

\begin{theorem} \label{tSymmetries2D}%
  Let $\mathcal H$ be a finite set of affine hyperplanes in $\RR^n$
  such that no three hyperplanes from $\mathcal H$ meet in
  codimension~$2$. Then there is a nonzero polynomial
  $f \in \RR[x_1 \upto x_n]$ such that for all $\ve v \in \RR^n$ with
  $f(\ve v) \ne 0$ and for
  $H_1 \upto H_m,H'_1 \upto H'_m \in \mathcal H$ such that the normal
  vectors of $H_1 \upto H_m$ span a vector space of dimension $\ge 2$
  and $\bigl|\{H_1 \upto H_m\}\bigr| \ge 3$, we have: if the
  reflections $\ve w_i := \re_{H_i}(\ve v)$,
  $\ve w'_i := \re_{H'_i}(\ve v)$ of~$\ve v$ satisfy
  \[
    \lVert\ve w_i - \ve w_j\rVert = \lVert\ve w'_i - \ve w'_j\rVert
    \quad (1 \le i < j \le m),
  \]
  then $H_i = H'_i$ and therefore $\ve w_i = \ve w'_i$ for
  all~$i$. Moreover, for $H_1,H_2,H_3 \in \mathcal H$ with $H_1 \ne
  H_2$ we have
  \begin{equation} \label{eqIneq2}%
    \bigl\lVert\re_{H_1}(\ve v) - \ve v\bigr\rVert \ne
    \bigl\lVert\re_{H_2}(\ve v) - \ve v\bigr\rVert \ne
    \bigl\lVert\re_{H_1}(\ve v) - \re_{H_3}(\ve v)\bigr\rVert.
  \end{equation}
  In other words, each distance between~$\ve v$ and a reflection point
  is unique among the distances between~$\ve v$ and reflection points
  and distances between two reflection points.
\end{theorem}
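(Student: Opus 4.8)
We sketch a proof of \cref{tSymmetries3D,tSymmetries2D} together.

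\emph{Step 1 (reduction to polynomial identities).} Fix coordinates whose origin lies on no hyperplane of~$\mathcal H$. For $\ve v$ outside the hypersurface $\bigcup_{H\in\mathcal H}H$ the map $H\mapsto\re_H(\ve v)$ is injective, so in the setting of the theorem one may delete repetitions and assume $H_1\upto H_m$, hence also $H'_1\upto H'_m$, pairwise distinct; then $m\le|\mathcal H|$ and there are only finitely many tuple-pairs $\bigl((H_i),(H'_i)\bigr)$ to examine. Call such a pair \emph{offending} if it satisfies the geometric hypotheses but $H_i\ne H'_i$ for some~$i$. The plan is to show that for an offending pair the polynomial $\lVert\ve w_i-\ve w_j\rVert^2-\lVert\ve w'_i-\ve w'_j\rVert^2\in\RR[x_1\upto x_n]$ is nonzero for at least one pair $i<j$. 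Granting this, put $f$ equal to the product of a defining polynomial of $\bigcup_{H\in\mathcal H}H$, of one such nonzero polynomial for each offending pair, and of the finitely many difference polynomials needed for~\eqref{eqIneq2}, each of which is easily seen to be nonzero (for instance $\lVert\re_{H_1}(\ve v)-\ve v\rVert^2-\lVert\re_{H_2}(\ve v)-\ve v\rVert^2$ equals $4(\ell_1^2-\ell_2^2)$ with $\ell_k$ an affine form vanishing on~$H_k$, which is nonzero since $H_1\ne H_2$). Then $f(\ve v)\ne0$ makes every offending pair violate one of its distance equations, which is exactly what the two theorems assert. So it remains to prove: \emph{if the functions $\lVert\ve w_i-\ve w_j\rVert^2$ and $\lVert\ve w'_i-\ve w'_j\rVert^2$ agree as polynomials in~$\ve v$ for all $i<j$ and the geometric hypotheses hold, then $H_i=H'_i$ for all~$i$.}

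\emph{Step 2 (degree decomposition).} Write $\ve w_i=R_i\ve v+2\ve f_i$, where $R_i:=I-2\ve a_i\ve a_i^{\mathsf T}$ is the reflection in the linear hyperplane parallel to~$H_i$ (with $\ve a_i$ a unit normal) and $\ve f_i:=h_i\ve a_i$ is the foot of the perpendicular from the origin to~$H_i$. Since $R_i^{\mathsf T}=R_i=R_i^{-1}$,
\[
  \lVert\ve w_i-\ve w_j\rVert^2=\bigl\langle(R_i-R_j)^2\ve v,\ve v\bigr\rangle+4\bigl\langle(R_i-R_j)(\ve f_i-\ve f_j),\ve v\bigr\rangle+4\lVert\ve f_i-\ve f_j\rVert^2 .
\]
Comparing homogeneous parts of degrees $2$, $1$, $0$ with the primed expression gives, for all $i<j$,
\[
  (R_i-R_j)^2=(R'_i-R'_j)^2,\qquad(R_i-R_j)(\ve f_i-\ve f_j)=(R'_i-R'_j)(\ve f'_i-\ve f'_j),\qquad\lVert\ve f_i-\ve f_j\rVert=\lVert\ve f'_i-\ve f'_j\rVert .
\]
A short computation shows $(R_i-R_j)^2=4\sin^2\theta_{ij}\,\Pi_{ij}$, where $\theta_{ij}$ is the angle between the lines $\RR\ve a_i$, $\RR\ve a_j$ and $\Pi_{ij}$ is the orthogonal projection onto $\operatorname{span}(\ve a_i,\ve a_j)$. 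Hence the first relation yields $\theta_{ij}=\theta'_{ij}$ for all $i,j$ and, whenever $\ve a_i\not\parallel\ve a_j$, the equality $\operatorname{span}(\ve a_i,\ve a_j)=\operatorname{span}(\ve a'_i,\ve a'_j)$.

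\emph{Step 3 (directions are preserved: $\ve a'_i=\pm\ve a_i$).} If the $\ve a_i$ span a space of dimension $\ge3$ (the situation of \cref{tSymmetries3D}), this is immediate: for each~$i$ choose $j,k$ with $\ve a_i,\ve a_j,\ve a_k$ linearly independent, so $\ve a'_i\in\operatorname{span}(\ve a_i,\ve a_j)\cap\operatorname{span}(\ve a_i,\ve a_k)=\RR\ve a_i$. If the $\ve a_i$ span a plane~$W$ (the situation of \cref{tSymmetries2D}), then by the span equalities the $\ve a'_i$ also lie in~$W$, and as every $\lVert\ve w_i-\ve w_j\rVert^2$ depends on $\ve v$ only through its $W$-component we may take $n=2$. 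The relations $\theta_{ij}=\theta'_{ij}$ say that the assignment of line directions $\gamma_i\mapsto\gamma'_i$ is an isometry of a subset of $\RR/\pi\mathbb Z$; with at least three distinct directions this forces $\gamma'_i\equiv\gamma_i+\psi$ (``rotation'') or $\gamma'_i\equiv\psi-\gamma_i$ (``reflection'') for a fixed~$\psi$, the cases of at most two distinct directions being treated directly from the last two relations of Step~2. In the rotation case $R'_i=\operatorname{rot}_{2\psi}R_i=R_i\operatorname{rot}_{-2\psi}$, so $R'_i-R'_j=(R_i-R_j)\operatorname{rot}_{-2\psi}$; since $R_i-R_j$ is invertible whenever $H_i\not\parallel H_j$, the middle relation gives $\ve f'_i-\ve f'_j=\operatorname{rot}_{2\psi}(\ve f_i-\ve f_j)$ for all non-parallel pairs. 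The ``non-parallel'' graph on $\{1\upto m\}$ is complete multipartite with at least two parts, hence connected, so $\ve f'_i=\operatorname{rot}_{2\psi}\ve f_i+\ve c$ for all~$i$ and a fixed~$\ve c$. But $\ve f'_i$ is a multiple of the normal of~$H'_i$, so it points in direction $\gamma_i+\psi+\pi/2$, whereas $\operatorname{rot}_{2\psi}\ve f_i$ points in direction $\gamma_i+2\psi+\pi/2$; substituting this into $\ve f'_i=\operatorname{rot}_{2\psi}\ve f_i+\ve c$ and eliminating~$i$ shows that, unless $\sin\psi=0$, all the~$\ve f_i$ lie on one fixed circle through the origin. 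By Thales' theorem, the feet of the perpendiculars from the origin to a pencil of lines through a common point~$Q$ are exactly the points of the circle on diameter~$\overline{OQ}$; hence $H_1\upto H_m$ would all be concurrent, contradicting $|\{H_i\}|\ge3$ together with the assumption that no three hyperplanes of~$\mathcal H$ meet in codimension~$2$. Therefore $\sin\psi=0$, i.e.\ $\ve a'_i=\pm\ve a_i$; the reflection case is entirely analogous (with a fixed reflection in place of $\operatorname{rot}_{2\psi}$) and again forces a forbidden concurrence.

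\emph{Step 4 (conclusion).} Reorient the normals of the~$H'_i$ so that $\ve a'_i=\ve a_i$; then $R'_i=R_i$, $\ve f'_i=h'_i\ve a_i$, and the middle relation of Step~2 becomes $(R_i-R_j)\bigl((h_i-h'_i)\ve a_i-(h_j-h'_j)\ve a_j\bigr)=0$. For $\ve a_i\not\parallel\ve a_j$ the kernel of $R_i-R_j$ meets $\operatorname{span}(\ve a_i,\ve a_j)$ only in~$0$, so $(h_i-h'_i)\ve a_i=(h_j-h'_j)\ve a_j$, and linear independence forces $h_i=h'_i$ and $h_j=h'_j$. As the normals span a space of dimension $\ge2$, every~$i$ has such a partner~$j$, whence $H_i=H'_i$ for all~$i$. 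The step I expect to be hardest is Step~3 in the two-dimensional case: getting from ``the angles between the lines are preserved'' to ``$H_i=H'_i$'' really does require both the classification of the possible direction-isometries and the Thales argument that converts each of the two offending scenarios into a concurrence ruled out by hypothesis.
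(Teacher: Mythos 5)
Your argument takes a genuinely different route from the paper's: instead of decomposing the distance polynomials into homogeneous parts, the paper evaluates the identity $\lVert\re_{H_1}(\ve v)-\re_{H_2}(\ve v)\rVert=\lVert\re_{H'_1}(\ve v)-\re_{H'_2}(\ve v)\rVert$ at a point $\ve v\in H_1\cap H_2$, deduces $H_1\cap H_2=H'_1\cap H'_2$ (\cref{lSymmetries}), converts this via the no-three-concurrent hypothesis into $\{H_1,H_2\}=\{H'_1,H'_2\}$, and then unscrambles the sets using a third hyperplane. Your Steps~1, 2 and~4 are correct, and the three-dimensional case goes through. The gap is in Step~3 for the planar case, and it is a real one, not a matter of detail.

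Concretely, the sub-case of ``at most two distinct directions'' is not ``treated directly from the last two relations of Step~2'': all three relations of Step~2 can hold identically while $H_i\ne H'_i$. Take unit vectors $\ve a,\ve b\in\RR^2$ with $c:=\langle\ve a,\ve b\rangle\notin\{0,\pm1\}$, let $H_1,H_2$ be distinct hyperplanes with normal $\ve a$ and offsets $h_1\ne h_2$, let $H_3=\{\ve x:\langle\ve b,\ve x\rangle=k\}$, and set $H'_i:=\sigma(H_i)$ with $\sigma:=\re_{H_3}$, so that $H'_3=H_3$ while $H'_1,H'_2$ have unit normal $\ve a':=2c\ve b-\ve a$ and offsets $h'_i:=2ck-h_i$. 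Writing $p_i:=\langle\ve a,\ve v\rangle-h_i$, $p'_i:=\langle\ve a',\ve v\rangle-h'_i$ and $q:=\langle\ve b,\ve v\rangle-k$, one checks $p_i+p'_i=2cq$ identically and $\langle\ve a',\ve b\rangle=c$, whence $\lVert\ve w'_i-\ve w'_3\rVert^2=4(p_i'^2+q^2-2cp'_iq)=4(p_i^2+q^2-2cp_iq)=\lVert\ve w_i-\ve w_3\rVert^2$ for all $\ve v$, and also $\lVert\ve w'_1-\ve w'_2\rVert=2|h_1-h_2|=\lVert\ve w_1-\ve w_2\rVert$. So every distance polynomial agrees, the normals span dimension two, there are three distinct hyperplanes, and yet $H_1\ne H'_1$. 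What rules this out is only the hypothesis on $\mathcal H$: $H_1$, $H_3$ and $H'_1=\sigma(H_1)$ all contain the point $H_1\cap H_3$, so they are three concurrent members of $\mathcal H$. Hence the concurrence hypothesis must be invoked in the two-direction sub-case as well, which your sketch never does; no argument ``directly from the last two relations'' can work, since those relations are satisfied here. The same example also undermines the claim that the reflection case $\gamma'_i\equiv\psi-\gamma_i$ is ``entirely analogous'' to the rotation case: here the direction map is precisely such a reflection (fixing the direction of $\ve b$), yet $H_1,H_2,H_3$ are \emph{not} concurrent because $H_1\parallel H_2$, so the Thales argument cannot terminate with a concurrence among the $H_i$ themselves -- the forbidden triple necessarily mixes primed and unprimed hyperplanes, and that branch needs its own argument. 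All of this looks repairable (in each bad scenario one should exhibit a concurrent triple drawn from $\{H_i\}\cup\{H'_i\}\subseteq\mathcal H$), but as written the planar case is not proved.
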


\begin{proof}[Proof of \cref{tSymmetries3D,tSymmetries2D}]
  We first need to construct the polynomial~$f$. It follows from
  \cref{lSymmetries}\ref{lSymmetriesA} (see below this proof) that for
  hyperplanes $H_1,H_2,H_3 \in \mathcal H$, the polynomial
  $g_{H_1,H_2,H'} \in \RR[x_1 \upto x_n]$ given by
  \[
    g_{H_1,H_2,H_3}(\ve v) = \bigl\lVert\re_{H_1}(\ve v) -
    \re_{H_3}(\ve v)\bigr\rVert^2 - \bigl\lVert\re_{H_2}(\ve v) - \ve
    v\bigr\rVert^2
  \]
  is nonzero. If $H_1 \ne H_2$, there is $\ve v \in \RR^n$ such that
  $\bigl\lVert\re_{H_1}(\ve v) - \ve v\bigr\rVert \ne
  \bigl\lVert\re_{H_2}(\ve v) - \ve v\bigr\rVert$: one needs to avoid
  the hyperplane consisting of all points that have equal distance to
  $H_1$ and $H_2$. So the polynomial $h_{H_1,H_2}$ defined by
  \[
    h_{H_1,H_2}(\ve v) = \bigl\lVert\re_{H_1}(\ve v) - \ve v\bigr\rVert^2 -
    \bigl\lVert\re_{H_2}(\ve v) - \ve v\bigr\rVert^2
  \]
  also is nonzero. The first part of the construction of~$f$ is now
  given by
  \[
    f_{\operatorname{part} 1} := \Biggl(\prod_{H_1,H_2,H_3 \in \mathcal
      H} g_{H_1,H_2,H_3}\Biggr) \Biggl(\prod_{\substack{H_1,H_2 \in \mathcal H \\
        H_1 \ne H_2}} h_{H_1,H_2}\Biggr).
  \]
  So if $f_{\operatorname{part} 1}(\ve v) \ne 0$
  then~\cref{eqIneq3} and~\cref{eqIneq2} are satisfied.
  
  The second part of the construction of~$f$ takes different routes
  for \cref{tSymmetries3D} or \ref{tSymmetries2D}. In the following,
  we will call affine hyperplanes \df{linearly independent} if their
  normal vectors are linearly independent. It is easy to see that~$k$
  hyperplanes are linearly independent if and only if they meet in
  codimension~$k$: just consider the system of linear equations for
  determining their intersection.
  \begin{description}
  \item[Case of \cref{tSymmetries3D}] We take three linearly
    independent hyperplanes $H_1,H_2,H_3 \in \mathcal H$ and three
    further hyperplanes $H'_1,H'_2,H'_3 \in \mathcal H$ such that
    \[
      \bigl\lVert\re_{H_i}(\ve v) - \re_{H_j}(\ve v)\bigr\rVert =
      \bigl\lVert\re_{H'_i}(\ve v) - \re_{H'_j}(\ve v)\bigr\rVert
      \quad (1 \le i < j \le 3) \quad \text{for all} \ \ve v \in
      \RR^n.
    \]
    By \cref{lSymmetries}\ref{lSymmetriesB}, this implies
    $H_i \cap H_j = H'_i \cap H'_j$ for all~$i$ and~$j$. Writing 
    $\Aff(M)$ for the affine subspace spanned by some point set 
    $M$ and using the linear independence of the $H_i$, we conclude
    \[
      H_1 = \Aff\bigl((H_1 \cap H_2) \cup (H_1 \cap H_3)\bigr) =
      \Aff\bigl((H'_1 \cap H'_2) \cup (H'_1 \cap H'_3)\bigr) \subseteq
      H'_1,
    \]
    so $H_1 = H'_1$. The same argument shows $H_i = H'_i$ for
    all~$i$. We write this as $(H_1,H_2,H_3) =
    (H'_1,H'_2,H'_3)$. Going to the contrapositive, we have shown that
    for $H_1,H_2,H_3 \in \mathcal H$ linearly independent and for
    $H'_1,H'_2,H'_3 \in \mathcal H$ with
    $(\underline H) := (H_1,H_2,H_3) \ne (H'_1,H'_2,H'_3) =:
    (\underline H')$, the polynomial
    $f_{\underline H,\underline H'} \in \RR[x_1 \upto x_n]$ defined by
    \[
      f_{\underline H,\underline H'}(\ve v) = \sum_{1 \le i < j \le 3}
      \Bigl(\bigl\lVert\re_{H_i}(\ve v) - \re_{H_j}(\ve
      v)\bigr\rVert^2 - \bigl\lVert\re_{H'_i}(\ve v) - \re_{H'_j}(\ve
      v)\bigr\rVert^2\Bigr)^2
    \]
    is nonzero. We set
    \[
      f_{\operatorname{part} 2} := \prod_{\substack{H_1,H_2,H_3 \in \mathcal H \\
          \text{linearly independent}}} \,\,\,
      \prod_{\substack{H'_1,H'_2,H'_3 \in \mathcal H \\ \text{with}
          (\underline H) \ne (\underline H')}} f_{\underline
        H,\underline H'}.
    \]
  \item[Case of \cref{tSymmetries2D}] Let $H_1,H_2 \in \mathcal H$ be
    linearly independent, which means that they are not parallel. Then
    \cref{lSymmetries}\ref{lSymmetriesC} tells us that for
    $H'_1,H'_2 \in \mathcal H$ with $\{H_1,H_2\} \ne \{H'_1,H'_2\}$
    the polynomial $f_{\underline H,\underline H'}$ given by
    \[
      f_{\underline H,\underline H'}(\ve v) = \bigl\lVert\re_{H_1}(\ve
      v) - \re_{H_2}(\ve v)\bigr\rVert^2 - \bigl\lVert\re_{H'_1}(\ve
      v) - \re_{H'_2}(\ve v)\bigr\rVert^2
    \]
    is nonzero. In this case we set 
    \[
      f_{\operatorname{part} 2} := \prod_{\substack{H_1,H_2 \in \mathcal H \\
          \text{linearly independent}}} \,\,\,
      \prod_{\substack{H'_1,H'_2 \in \mathcal H \ \text{with} \\
          \{H_1,H_2\} \ne \{H'_1,H'_2\}}} f_{\underline H,\underline
        H'}.
    \]
  \end{description}
  In both cases we set
  $f := f_{\operatorname{part} 1} \cdot f_{\operatorname{part} 2}$. So
  $f(\ve v) \ne 0$ implies~\cref{eqIneq3} and~\cref{eqIneq2}.

  To prove the other assertions of the theorems, let $\ve v \in \RR^n$
  with $f(\ve v) \ne 0$ and take
  $H_1 \upto H_m,H'_1 \upto H'_m \in \mathcal H$ as in the theorems
  such that the $\ve w_i = \re_{H_i}(\ve v)$ and
  $\ve w'_i := \re_{H'_i}(\ve v)$ satisfy
  \begin{equation} \label{eqWiWis}%
    \lVert\ve w_i - \ve w_j\rVert = \lVert\ve w'_i - \ve w'_j\rVert
    \quad (1 \le i < j \le m).
  \end{equation}
  Let $i \in \{1 \upto m\}$ be arbitrary. We need to show
  $H_i = H'_i$. Again the arguments for \cref{tSymmetries3D}
  and~\ref{tSymmetries2D} differ.

  \begin{description}
  \item[Case of \cref{tSymmetries3D}] By hypothesis we can choose~$j$
    and~$k$ in $\{1 \upto m\}$ such that $H_i,H_j,H_k$ have linearly
    independent normal vectors. Assume that $H_i \ne H'_i$. Then
    $f_{H_i,H_j,H_k,H'_i,H'_j,H'_k}(\ve v) \ne 0$ implies that at
    least one of the differences
    $\lVert\ve w_i - \ve w_j\rVert^2 - \lVert\ve w'_i - \ve
    w'_j\rVert^2$,
    $\lVert\ve w_i - \ve w_k\rVert^2 - \lVert\ve w'_i - \ve
    w'_k\rVert^2$, or
    $\lVert\ve w_j - \ve w_k\rVert^2 - \lVert\ve w'_j - \ve
    w'_k\rVert^2$ is nonzero, contradicting~\cref{eqWiWis}. We
    conclude $H_i = H'_i$, as desired.
  \item[Case of \cref{tSymmetries2D}] By hypothesis we can choose
    $j \in \{1 \upto m\}$ such that $H_i,H_j$ are linearly
    independent. It follows that $\{H_i,H_j\} = \{H'_i,H'_j\}$, since
    otherwise $f_{H_i,H_j,H'_i,H'_j}(\ve v) \ne 0$ would imply
    $\lVert\ve w_i - \ve w_j\rVert \ne \lVert\ve w'_i - \ve
    w'_j\rVert$, contradicting~\cref{eqWiWis}. Also by hypothesis
    there exists $k \in \{1 \upto m\}$ such that
    $H_i \ne H_k \ne H_j$. $H_k$ cannot be parallel to both $H_i$ and
    $H_j$. So $H_i$ and $H_k$ or $H_j$ and $H_k$ are linearly
    independent. In the first case, we get, as above,
    $\{H_i,H_k\} = \{H'_i,H'_k\}$, and in the second case we get
    $\{H_j,H_k\} = \{H'_j,H'_k\}$. But either case, together with
    $\{H_i,H_j\} = \{H'_i,H'_j\}$, implies $H_i = H'_i$, $H_j = H'_j$
    and $H_k = H'_k$.
  \end{description}
  In both cases we have seen that $H_i = H'_i$, which finishes the
  proof.
\end{proof}

The following lemma was used in the above proof.

\begin{lemma} \label{lSymmetries}%
  Assume the hypotheses of \cref{tSymmetries3D}.
  \begin{enumerate}[label=(\alph*)]
  \item \label{lSymmetriesA} Let $H_1,H_2,H_3 \in \mathcal H$. Then there
    exists $\ve v \in \RR^n$ such that
    $\bigl\lVert\re_{H_1}(\ve v) - \re_{H_3}(\ve v)\bigr\rVert \ne
    \bigl\lVert\re_{H_2}(\ve v) - \ve v\bigr\rVert$.
  \item \label{lSymmetriesB} Let $H_1,H_2,H'_1,H'_2 \in \mathcal H$. If
    $\bigl\lVert\re_{H_1}(\ve v) - \re_{H_2}(\ve v)\bigr\rVert =
    \bigl\lVert\re_{H'_1}(\ve v) - \re_{H'_2}(\ve v)\bigr\rVert$ for
    all $\ve v \in \RR^n$, then either
    $H_1 \cap H_2 = H'_1 \cap H'_2$, or $H_1 = H_2$ and $H'_1 = H'_2$.
  \item \label{lSymmetriesC} Under the hypotheses of \cref{tSymmetries2D}, the
    assertion of \cref{lSymmetriesB} can be sharpened as follows: either
    $\{H_1,H_2\} = \{H'_1,H'_2\}$, or $H_1$ is parallel to $H_2$ and
    so is $H'_1$ to $H'_2$.
  \end{enumerate}
\end{lemma}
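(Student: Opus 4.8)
The plan is to convert reflections into polynomials and then argue geometrically. Write each $H\in\mathcal H$ as $\{\ve x\in\RR^n:\langle\ve a_H,\ve x\rangle=b_H\}$ with $\lVert\ve a_H\rVert=1$ and set $s_H(\ve v):=\langle\ve a_H,\ve v\rangle-b_H$; then $\re_H(\ve v)-\ve v=-2\,s_H(\ve v)\,\ve a_H$ and $\re_{H_1}(\ve v)-\re_{H_2}(\ve v)=2\bigl(s_{H_2}(\ve v)\,\ve a_{H_2}-s_{H_1}(\ve v)\,\ve a_{H_1}\bigr)$, so every quantity in the lemma is a polynomial in $\ve v$ of degree at most~$2$. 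I would first record two observations: (i) $\lVert\re_{H_1}(\ve v)-\re_{H_2}(\ve v)\rVert^2$ is the zero polynomial exactly when $\re_{H_1}=\re_{H_2}$, that is, when $H_1=H_2$; and (ii) if $H_1\ne H_2$ with $\ve a_{H_1},\ve a_{H_2}$ linearly independent, then $s_{H_2}\ve a_{H_2}-s_{H_1}\ve a_{H_1}$ vanishes precisely on $H_1\cap H_2$, so the zero locus of $\lVert\re_{H_1}(\ve v)-\re_{H_2}(\ve v)\rVert^2$ is exactly that codimension-$2$ flat, whereas if $H_1\parallel H_2$ and $H_1\ne H_2$ a one-line computation (using $\ve a_{H_2}=\pm\ve a_{H_1}$) shows this polynomial equals the positive constant $4\operatorname{dist}(H_1,H_2)^2$ and hence has empty zero locus.

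With these in hand, part~\ref{lSymmetriesB} follows quickly. If $\lVert\re_{H_1}(\ve v)-\re_{H_2}(\ve v)\rVert^2$ is identically zero, (i) gives $H_1=H_2$, the hypothesis then forces $\lVert\re_{H'_1}(\ve v)-\re_{H'_2}(\ve v)\rVert^2$ to vanish identically, and (i) gives $H'_1=H'_2$: the second alternative. Otherwise $H_1\ne H_2$, so by (ii) the zero locus of $\lVert\re_{H_1}(\ve v)-\re_{H_2}(\ve v)\rVert^2$ is either empty or the codimension-$2$ flat $H_1\cap H_2$, in particular not all of $\RR^n$; since the hypothesis makes this the same as the zero locus of $\lVert\re_{H'_1}(\ve v)-\re_{H'_2}(\ve v)\rVert^2$, which by (i)--(ii) is one of $\RR^n,\ \emptyset,\ H'_1\cap H'_2$, the first possibility is ruled out and we get $H_1\cap H_2=H'_1\cap H'_2$. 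For part~\ref{lSymmetriesC} I would start from this conclusion. If $H_1=H_2$ then $H'_1=H'_2$ and both pairs are trivially parallel; if $H_1\parallel H_2$ but $H_1\ne H_2$, then $\emptyset=H_1\cap H_2=H'_1\cap H'_2$ forces $H'_1\parallel H'_2$; so in both sub-cases we are in the second alternative. In the remaining case $H_1\not\parallel H_2$, the flat $L:=H_1\cap H_2=H'_1\cap H'_2$ has codimension exactly~$2$ and lies in all four hyperplanes, so if $\{H_1,H_2\}\ne\{H'_1,H'_2\}$ we can pick $H'\in\{H'_1,H'_2\}\setminus\{H_1,H_2\}$; then $H_1,H_2,H'$ are three distinct hyperplanes of $\mathcal H$ all containing $L$, whence $H_1\cap H_2\cap H'=L$ has codimension~$2$, contradicting the hypothesis of \cref{tSymmetries2D}. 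Therefore $\{H_1,H_2\}=\{H'_1,H'_2\}$.

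Part~\ref{lSymmetriesA} is the step requiring an actual (but short) computation: it suffices to show the degree-$\le 2$ polynomial
\[
  g(\ve v):=\bigl\lVert\re_{H_1}(\ve v)-\re_{H_3}(\ve v)\bigr\rVert^2-\bigl\lVert\re_{H_2}(\ve v)-\ve v\bigr\rVert^2
\]
is not the zero polynomial. From the formulas above, the homogeneous degree-$2$ part of $g$ is the quadratic form $\ve x\mapsto 4\bigl(\lVert M\ve x\rVert^2-\langle\ve a_{H_2},\ve x\rangle^2\bigr)$ with $M:=\ve a_{H_1}\ve a_{H_1}^{\top}-\ve a_{H_3}\ve a_{H_3}^{\top}$ symmetric. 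If $g\equiv 0$ this form is identically zero, which (both matrices being symmetric) forces $M^2=\ve a_{H_2}\ve a_{H_2}^{\top}$. But the right-hand side has rank~$1$, while $M$ has rank~$0$ if $H_1\parallel H_3$ and rank~$2$ otherwise --- its image is then $\operatorname{span}(\ve a_{H_1},\ve a_{H_3})$, and $M\ve a_{H_1}=\ve a_{H_1}-\langle\ve a_{H_1},\ve a_{H_3}\rangle\ve a_{H_3}$ and $M\ve a_{H_3}=\langle\ve a_{H_1},\ve a_{H_3}\rangle\ve a_{H_1}-\ve a_{H_3}$ are independent because $\lvert\langle\ve a_{H_1},\ve a_{H_3}\rangle\rvert<1$ --- and for symmetric $M$ one has $\operatorname{rank}(M^2)=\operatorname{rank}(M)\in\{0,2\}$. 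This contradiction gives $g\not\equiv 0$.

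I expect parts~\ref{lSymmetriesB} and~\ref{lSymmetriesC} to be essentially bookkeeping with the zero loci from observation~(ii), the only care being to treat the degenerate shapes correctly (all of $\RR^n$ when two hyperplanes coincide, $\emptyset$ when they are parallel but distinct). The step with genuine content is part~\ref{lSymmetriesA}; its crux is the strict rank assertion $\operatorname{rank}(M^2)=\operatorname{rank}(M)\ne 1$, which in turn rests only on $\lvert\langle\ve a_{H_1},\ve a_{H_3}\rangle\rvert<1$ for linearly independent unit normals, plus the trivial fact that $M$ vanishes when $H_1\parallel H_3$.
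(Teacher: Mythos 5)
Your proof is correct, but it travels a more computational road than the paper's, most visibly in part~\ref{lSymmetriesA}. For parts~\ref{lSymmetriesB} and~\ref{lSymmetriesC} the two arguments rest on the same geometric fact --- that for $H_1 \ne H_2$ the vanishing locus of $\lVert\re_{H_1}(\ve v)-\re_{H_2}(\ve v)\rVert$ is exactly $H_1\cap H_2$ --- but you establish it by writing the reflections in terms of unit normals and reading off the zero locus of an explicit polynomial, whereas the paper evaluates at a point $\ve v\in H_1\cap H_2$ and uses the characterization of a hyperplane as the set of points equidistant from $\ve v$ and $\re_{H}(\ve v)$; your version has the small advantage of treating the parallel-and-distinct case explicitly rather than vacuously. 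The real divergence is in part~\ref{lSymmetriesA}: the paper derives it from the machinery of part~\ref{lSymmetriesB} by specializing to $\ve v\in H_2$, concluding $H_2\subseteq H_1\cap H_3$ and obtaining a codimension contradiction --- three lines and no computation. You instead compare leading quadratic forms, reduce to the matrix identity $M^2=\ve a_{H_2}\ve a_{H_2}^{\top}$ with $M=\ve a_{H_1}\ve a_{H_1}^{\top}-\ve a_{H_3}\ve a_{H_3}^{\top}$, and kill it with the rank argument $\operatorname{rank}(M^2)=\operatorname{rank}(M)\in\{0,2\}\ne 1$; this is self-contained and correct (the key points $\lvert\langle\ve a_{H_1},\ve a_{H_3}\rangle\rvert<1$ for independent unit normals and $\ker M^2=\ker M$ for symmetric $M$ both check out, and the case $H_1=H_3$ is absorbed into the rank-$0$ case), but heavier than necessary. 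Both routes are valid; the paper's is shorter and coordinate-free, while yours makes every degenerate configuration (equal, parallel, transversal) explicit and yields the slightly stronger information that already the homogeneous degree-two part of $g$ is a nonzero quadratic form.
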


\begin{proof}
  \begin{enumerate}
  \item[\ref{lSymmetriesB}] Take $\ve v \in H_1 \cap H_2$. Then
    $\re_{H_1}(\ve v) = \ve v = \re_{H_2}(\ve v)$, so
    $\re_{H'_1}(\ve v) = \re_{H'_2}(\ve v) =: \ve w$ by hypothesis. If
    $\ve w = \ve v$, this implies $\ve v \in H'_1 \cap H'_2$. On the
    other hand, if $\ve w \ne \ve v$, then $H'_1$ consists of all
    points that have equal distance to~$\ve v$ and to~$\ve w$, and the
    same for $H'_2$; hencce $H'_1 = H'_2$. We conclude that either
    $H_1 \cap H_2 \subseteq H'_1 \cap H'_2$ or $H'_1 = H'_2$. From
    this \cref{lSymmetriesB} follows by reversing the roles of the
    $H_i$ and the $H'_i$.
  \item[\ref{lSymmetriesC}] The assertion of \cref{lSymmetriesB}
    holds, but now we have the additional hypothesis of
    \cref{tSymmetries2D}. Assume that $H_1$ is not parallel to $H_2$,
    which implies $H_1 \ne H_2$, and also that $H_1 \cap H_2$ is
    nonempty of codimension~$2$. So \cref{lSymmetriesB} yields
    $H_1 \cap H_2 = H'_1 \cap H'_2$. This implies that $H'_1$ is not
    parallel to $H'_2$, and that the intersection of all four
    hyperplanes has codimension~$2$. So by the hypothesis of
    \cref{tSymmetries2D}, these four hyperplanes are in fact only two
    (distinct) ones, so $\{H_1,H_2\} = \{H'_1,H'_2\}$.
  \item[\ref{lSymmetriesA}] Assume
    $\bigl\lVert\re_{H_1}(\ve v) - \re_{H_3}(\ve v)\bigr\rVert =
    \bigl\lVert\re_{H_2}(\ve v) - \ve v\bigr\rVert$ for
    all~$\ve v \in \RR^n$. Then in particular for $\ve v \in H_2$ this
    implies $\re_{H_1}(\ve v) = \re_{H_3}(\ve v)$. As in the proof of
    \cref{lSymmetriesB}, either $\ve v \in H_1 \cap H_3$ or
    $H_1 = H_3$ follows. But the latter is impossible since it would
    imply
    $\bigl\lVert\re_{H_1}(\ve v') - \re_{H_3}(\ve v')\bigr\rVert = 0
    \ne \bigl\lVert\re_{H_2}(\ve v') - \ve v'\bigr\rVert$
    for~$\ve v' \notin H_2$, contradicting our assumption. So we
    conclude $H_2 \subseteq H_1 \cap H_3$. But an affine hypersurface
    cannot be contained in the intersection of two affine
    hypersurfaces unless they are equal, which we have already
    excluded. So our assumption leads to an unescapable
    contradiction. \qed
  \end{enumerate}
  \renewcommand{\qed}{}
\end{proof}

\section{The Cayley-Menger matrix and affine subspaces} \label{sCM}%

This section introduces some geometric tools that will be needed in
\cref{sAlgorithm}.

For vectors (or ``points'') $\ve v_0 \upto \ve v_m \in V$ in a vector
space over a field $K$, recall that the \df{affine subspace} spanned
by them, written here as $\Aff(\ve v_0 \upto \ve v_m)$, is the subset
of $V$ consisting of all linear combinations
$\sum_{i=0}^m \alpha_i \ve v_i$ with $\sum_{i=0}^m \alpha_i = 1$. Its
dimension is defined to be the dimension of the associated linear
space, which consists of all linear combinations with
$\sum_{i=0}^m \alpha_i = 0$, and which is generated by the differences
$\ve v_i - \ve v_0$.

If $V$ is a Euclidean space, the \df{Cayley-Menger matrix} (see
\mycite{Cayley:1841}) of the~$\ve v_i$ is
\begin{equation} \label{eqCMMatrix}%
  C := \begin{pmatrix}
    0 & 1 & 1 & 1 & \cdots & 1 \\
    1 & 0 & D_{0,1} & D_{0,2} & \cdots & D_{0,m} \\
    1 & D_{1,0} & 0 & D_{1,2} & \cdots & D_{1,m}  \\
    1 & D_{2,0} & D_{2,1} & 0 & \cdots & D_{2,m} \\
    \vdots & \vdots & \vdots & \vdots & \ddots & \vdots \\
    1 & D_{m,0} & D_{m,1} & D_{m,2} & \cdots & 0
  \end{pmatrix} \in \RR^{(m+2) \times (m+2)}
\end{equation}
with $D_{i,j} := \lVert\ve v_i - \ve v_j\rVert^2$. For ease of talking
about the rank of the Cayley-Menger matrix and some other matrices, we
find it convenient to introduce the \df{bordered rank} of a matrix $M$
as
\begin{equation} \label{eqBRank}%
  \brank(M) := \rank\left(
    \begin{array}{c|ccc}
      0 & 1 & \cdots & 1 \\
      \hline 1 \\
      \vdots & & M \\
      1
    \end{array}
  \right) - 2.
\end{equation}

The following results are likely to be folklore, but we could not find
a reference in the literature.

\begin{prop} \label{pCM}%
  Let $\ve v_0 \upto \ve v_m \in V$ be points in a Euclidean space and
  set $A := \Aff(\ve v_0 \upto \ve v_m)$.
  \begin{enumerate}[label=(\alph*)]
  \item \label{pCMa} With $D_{i,j} := \lVert\ve v_i - \ve v_j\rVert^2$
    we have
    \begin{multline*}
      \brank\bigl(D_{i,j}\bigr)_{i,j = 0 \upto m} =
      \brank\bigl(\langle\ve v_i,\ve v_j\rangle\bigr)_{i,j = 0 \upto
        m} = \\
      \rank\bigl(\langle\ve v_i - \ve v_0,\ve v_j - \ve
      v_0\rangle\bigr)_{i,j = 1 \upto m} = \dim(A).
    \end{multline*}
    In particular, the Cayley-Menger matrix has rank equal to $\dim(A)
    + 2$.
  \item \label{pCMb} A point $\ve w \in A$ is uniquely determined by the distances
    between~$\ve w$ and the~$\ve v_i$. More specifically, assume, after possibly
    renumbering the~$\ve v_i$, that $A = \Aff(\ve v_0 \upto \ve v_n)$ with $n = \dim(A)$.
    Then with $d_i := \lVert\ve w - \ve v_i\rVert^2$ and
    \begin{equation} \label{eqIMatrix}%
      I :=
      \begin{pmatrix}
        0 & 1 & \cdots & 1 \\
        1 & \langle\ve v_0,\ve v_0\rangle & \cdots & \langle\ve
        v_0,\ve v_n\rangle \\
        \vdots & \vdots & & \vdots \\
        1 & \langle\ve v_n,\ve v_0\rangle & \cdots &\langle\ve v_n,\ve
        v_n\rangle
      \end{pmatrix} \in \RR^{(n+2) \times (n+2)},
    \end{equation}
    the $\alpha_i$ given by
    \begin{equation} \label{eqAlpha}%
      \begin{pmatrix}
        \alpha_0 \\ \vdots \\ \alpha_n
      \end{pmatrix} :=
      \begin{pmatrix}
        0 & 1/2 & & \\
        \vdots & & \ddots \\
        0 & & & 1/2
      \end{pmatrix} I^{-1}
      \begin{pmatrix}
        2 \\
        \lVert\ve v_0\rVert^2 - d_0 \\
        \vdots \\
        \lVert\ve v_n\rVert^2 - d_n
      \end{pmatrix}        
    \end{equation}
    satisfy $\ve w = \sum_{i=0}^n \alpha_i \ve v_i$ and
    $\sum_{i=0}^n \alpha_i = 1$.
  \end{enumerate}
\end{prop}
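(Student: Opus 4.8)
The plan is to derive part~\ref{pCMa} from a short chain of rank-preserving row and column operations that strip the bordered matrices down to the Gram matrix of the difference vectors $\ve v_i - \ve v_0$, and then to invoke the classical fact that a Gram matrix has rank equal to the dimension of the span of the vectors involved. Part~\ref{pCMb} will then follow by writing down the linear system that the affine coordinates of~$\ve w$ must satisfy and noticing that its coefficient matrix is exactly~$I$, which is invertible by part~\ref{pCMa}.

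For part~\ref{pCMa} I would prove the three displayed quantities equal in the following order. First, $\rank\bigl(\langle\ve v_i - \ve v_0, \ve v_j - \ve v_0\rangle\bigr)_{i,j=1}^{m} = \dim(A)$: this is the standard statement that the Gram matrix of $\ve v_1 - \ve v_0 \upto \ve v_m - \ve v_0$ has rank equal to the dimension of the subspace these vectors span; the reason is that, by positive-definiteness of the inner product, the kernel of the Gram matrix consists exactly of the coefficient vectors of the linear dependences among the $\ve v_i - \ve v_0$, and by definition the span of these differences is the linear space associated with~$A$. Second, $\brank\bigl(\langle\ve v_i, \ve v_j\rangle\bigr)_{i,j=0}^{m} = \rank\bigl(\langle\ve v_i - \ve v_0, \ve v_j - \ve v_0\rangle\bigr)_{i,j=1}^{m}$: in the $(m+2) \times (m+2)$ bordered matrix of~\eqref{eqBRank}, subtract the data row and the data column belonging to~$\ve v_0$ from all the other data rows and columns, turning the inner block into $\bigl(\langle\ve v_i - \ve v_0, \ve v_j - \ve v_0\rangle\bigr)_{i,j=1}^{m}$ bordered by a few leftover entries; then further standard operations using the border row of~$1$'s and the $\ve v_0$-row and $\ve v_0$-column exhibit the result as the direct sum of a rank-$2$ block $\bigl(\begin{smallmatrix}0&1\\1&0\end{smallmatrix}\bigr)$ and the Gram matrix of the differences, so that subtracting~$2$ yields the equality. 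Third, $\brank\bigl(D_{i,j}\bigr)_{i,j=0}^{m} = \brank\bigl(\langle\ve v_i, \ve v_j\rangle\bigr)_{i,j=0}^{m}$: here I would use $D_{i,j} = \lVert\ve v_i\rVert^{2} + \lVert\ve v_j\rVert^{2} - 2\langle\ve v_i,\ve v_j\rangle$, subtracting $\lVert\ve v_i\rVert^{2}$ times the border row from the $i$-th data row and $\lVert\ve v_j\rVert^{2}$ times the border column from the $j$-th data column; this removes the two rank-one contributions and replaces the inner block by $-2\bigl(\langle\ve v_i,\ve v_j\rangle\bigr)$, after which rescaling the data rows by $-1/2$ and the border column by $-2$ restores the bordered matrix of $\bigl(\langle\ve v_i,\ve v_j\rangle\bigr)$. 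Chaining the three equalities proves part~\ref{pCMa}, and since the Cayley-Menger matrix of~\eqref{eqCMMatrix} is precisely the bordered matrix of $\bigl(D_{i,j}\bigr)$, its rank equals $\brank\bigl(D_{i,j}\bigr) + 2 = \dim(A) + 2$.

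For part~\ref{pCMb}, assume $\ve w \in A$ and, after renumbering, that $A = \Aff(\ve v_0 \upto \ve v_n)$ with $n = \dim(A)$; then $\ve w = \sum_{i=0}^{n} \alpha_i \ve v_i$ for scalars $\alpha_i$ with $\sum_{i=0}^{n} \alpha_i = 1$ that are a priori unknown. Expanding $d_j = \lVert\ve w - \ve v_j\rVert^{2} = \lVert\ve w\rVert^{2} - 2\langle\ve w,\ve v_j\rangle + \lVert\ve v_j\rVert^{2}$ and substituting $\ve w = \sum_i \alpha_i \ve v_i$ yields, for $j = 0 \upto n$, the identity $-\lVert\ve w\rVert^{2} + \sum_{i=0}^{n} 2\alpha_i \langle\ve v_j,\ve v_i\rangle = \lVert\ve v_j\rVert^{2} - d_j$; adjoining the constraint in the form $0 \cdot \bigl(-\lVert\ve w\rVert^{2}\bigr) + \sum_i 2\alpha_i = 2$, these $n+2$ equations say exactly that $I \cdot \bigl(-\lVert\ve w\rVert^{2}, 2\alpha_0 \upto 2\alpha_n\bigr)^{\mathsf T} = \bigl(2, \lVert\ve v_0\rVert^{2} - d_0 \upto \lVert\ve v_n\rVert^{2} - d_n\bigr)^{\mathsf T}$. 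Since $I$ is the bordered matrix of $\bigl(\langle\ve v_i,\ve v_j\rangle\bigr)_{i,j=0}^{n}$, part~\ref{pCMa} gives $\rank(I) = \dim\Aff(\ve v_0 \upto \ve v_n) + 2 = n+2$, so $I$ is invertible; hence $\bigl(-\lVert\ve w\rVert^{2}, 2\alpha_0 \upto 2\alpha_n\bigr)^{\mathsf T} = I^{-1} \bigl(2, \lVert\ve v_0\rVert^{2} - d_0 \upto \lVert\ve v_n\rVert^{2} - d_n\bigr)^{\mathsf T}$, and reading off its last $n+1$ components and dividing by~$2$ is precisely formula~\eqref{eqAlpha}. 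As the $\alpha_i$ are thereby expressed through the distances~$d_j$ alone, so is $\ve w = \sum_i \alpha_i \ve v_i$, which gives the uniqueness assertion.

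The step I expect to be the main obstacle is the careful bookkeeping in the second of the three reductions in part~\ref{pCMa}: one must verify that the border row of~$1$'s interacts correctly with the subtractions used to move between squared distances, inner products, and inner products of differences, and in particular that the summand that splits off has rank exactly~$2$, so that the ``$-2$'' built into the definition of $\brank$ lands on $\dim(A)$ rather than on $\dim(A) \pm 1$. Once that is settled, part~\ref{pCMb} is a routine verification, given invertibility of~$I$.
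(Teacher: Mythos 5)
Your proof is correct and takes essentially the same route as the paper's: bordered row and column operations expose the Gram matrix of the differences $\ve v_i - \ve v_0$ inside both the distance matrix and the inner-product matrix, and part (b) rests on the same linear system $I \cdot \bigl(-\lVert\ve w\rVert^2, 2\alpha_0, \ldots, 2\alpha_n\bigr)^{\mathsf T} = \bigl(2, \lVert\ve v_0\rVert^2 - d_0, \ldots, \lVert\ve v_n\rVert^2 - d_n\bigr)^{\mathsf T}$ followed by inverting $I$. The only real divergence is the Gram-rank step, where you invoke positive-definiteness of the inner product while the paper factors the Gram matrix as $E^T A E$ so that the identical computation also proves the more general \cref{rQuadratic} for possibly degenerate quadratic forms; for the proposition as stated your argument suffices.
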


\begin{rem} \label{rQuadratic}%
  A generalized version of \cref{pCM}\ref{pCMa} concerns the situation
  where $V$ is a vector space over a field $K$ of characteristic $\ne 2$
  equipped with a quadratic form~$q$. Then the above rank formula
  holds with $D_{i,j} := q(\ve v_i - \ve v_j)$ and
  $\langle\cdot,\cdot\rangle$ the bilinear form associated to~$q$, and
  furthermore with $\dim(A)$ replaced by $\rank(q|_{_U})$, the rank
  of~$q$ restricted to the linear space associated to $A$.
\end{rem}

\begin{proof}[Proof of \cref{pCM,rQuadratic}]
  We start with proving \cref{rQuadratic}, from which \cref{pCM}\ref{pCMa}
  follows as a special case. Form the matrix $I \in K^{(m+2) \times (m+2)}$ as 
  in~\cref{eqIMatrix}, using all the vectors~$\ve v_0 \upto \ve v_m$, not 
  just the first~$n$ of them. Now subtract the second row of $I$ from every 
  row below it, and then the second column from every column to the right of it. 
  The result is the matrix
  \[
    \tilde I = \left(\begin{array}{cc|ccc}
        0 & 1 & 0 & \cdots & 0 \\
        1 & \ast & \ast & \cdots & \ast \\
        \hline%
        0 & \ast \\
        \vdots & \vdots & & G \\
        0 & \ast
    \end{array}\right),
  \]
  where the stars stand for entries that we do not need to specify, 
  and the $(i,j)$-th entry of $G \in K^{m \times m}$ is
  \begin{equation} \label{eqViVj}
    \langle\ve v_i,\ve v_j\rangle - \langle\ve v_0,\ve v_j\rangle - 
    \langle\ve v_i,\ve v_0\rangle + \langle\ve v_0,\ve v_0\rangle =
    \langle\ve v_i - \ve v_0,\ve v_j - \ve v_0\rangle.
  \end{equation}
  We have
  \[
    \brank\bigl(\langle\ve v_i,\ve v_j\rangle\bigr)_{i,j = 0 \upto
        m} = \rank(I) - 2 = \rank(\tilde I) - 2 = \rank(G),
  \]
  so the second equation in the formula in \cref{pCM}\ref{pCMa} is proved.
  We can also start with the Cayley-Menger matrix $C$ (see~\cref{eqCMMatrix})
  with $D_{i,j} := q(\ve v_i - \ve v_j)$, and perform the same row and column
  operations that we performed on $I$. The resulting matrix is
 \[
    \tilde C = \left(\begin{array}{cc|ccc}
        0 & 1 & 0 & \cdots & 0 \\
        1 & \ast & \ast & \cdots & \ast \\
        \hline%
        0 & \ast \\
        \vdots & \vdots & & H \\
        0 & \ast
    \end{array}\right),
  \]
  where the $(i,j)$-th entry of $H \in K^{m \times m}$ is
  \begin{multline*}
    D_{i,j} - D_{0,j} - D_{i,0} = \langle\ve v_i - \ve v_j,\ve v_i -
    \ve v_j\rangle - \langle\ve v_0 - \ve v_j,\ve v_0 - \ve v_j\rangle - 
    \langle\ve v_i - \ve v_0,\ve v_i - \ve v_0\rangle = \\
    -2 \langle\ve v_i,\ve v_j\rangle + 2 \langle\ve v_0,\ve v_j\rangle
    + 2 \langle\ve v_i,\ve v_0\rangle - 2 \langle\ve v_0,\ve v_0\rangle,
  \end{multline*}
  so $H = -2 G$ by~\cref{eqViVj}. As above, we get 
  $\brank\bigl(D_{i,j}\bigr)_{i,j = 0 \upto m} = \rank(G)$, and the 
  second equation in the formula in \cref{pCM}\ref{pCMa} follows.

  It remains to show that $\rank(G) = \rank(q|_{_U})$. Replacing 
  every~$\ve v_i$ by~$\ve v_i - \ve v_0$ does not change $G$ or the
  subspace $U$, so making this replacement we may assume $\ve v_0 = 0$.
  Now $U$ is generated (as a vector space) by the~$\ve v_i$, and we may
  also replace $V$ by $U$. Since $V$ is now finite-dimensional we may 
  assume $V = K^n$. So we have $\langle\ve v,\ve w\rangle = \ve v^T A \ve w$
  with $A \in K^{n \times n}$ symmetric, and $\rank(A) = \rank(q) 
  = \rank(q|_{_U})$. So we need to show $\rank(G) = \rank(A)$.

  By~\cref{eqViVj} and since~$\ve v_0 = 0$, the entries of $G$ are the
  $\langle\ve v_i,\ve v_j\rangle$, so With $E := (\ve v_1|\ve v_2|\cdots|\ve v_m) 
  \in K^{n \times m}$ we have $G = E^T A E$. Since the~$\ve v_i$ generate $V$, 
  $E$ has rank~$n$, so the linear map $K^m \to K^n$ defined by $E$ is surjective.
  Likewise, the linear map $K^n \to K^m$ defined by $E^T$ is injective, so the map
  given by $E^T A E = G$ has an image of dimension equal to $\rank(A)$.
  This shows $\rank(G) = \rank(A)$, so the proof of \cref{rQuadratic} and
  \cref{pCM}\ref{pCMa} is finished.

  Now we prove \cref{pCM}\ref{pCMb}. The hypothesis $\ve w \in A$ implies that there exist
  $\alpha_0 \upto \alpha_n \in \RR$ such that $\ve w = \sum_{i=0}^n \alpha_i \ve v_i$ and
  $\sum_{i=0}^n \alpha_i = 1$. With $I$ as defined in~\cref{eqIMatrix}, we get
  \[
  I \cdot \begin{pmatrix}
    - \lVert\ve w\rVert^2 \\
    2 \alpha_0 \\
    \vdots \\
    2 \alpha_n
  \end{pmatrix} = \begin{pmatrix}
    2 \\
    - \lVert\ve w\rVert^2 + 2 \langle\ve v_0,\ve w\rangle \\
    \vdots \\
    - \lVert\ve w\rVert^2 + 2 \langle\ve v_n,\ve w\rangle
  \end{pmatrix} = \begin{pmatrix}
    2 \\
    \lVert\ve v_0\rVert^2 - \lVert\ve w - \ve v_0\rVert^2 \\
    \vdots \\
    \lVert\ve v_n\rVert^2 - \lVert\ve w - \ve v_n\rVert^2
  \end{pmatrix} = \begin{pmatrix}
      2 \\
    \lVert\ve v_0\rVert^2 - d_0 \\
    \vdots \\
    \lVert\ve v_n\rVert^2 - d_n
  \end{pmatrix}
  \]
  Since $I$ is invertible by \cref{pCM}\ref{pCMa}, the desired 
  equation~\cref{eqAlpha} follows from this.
\end{proof}

If we have points $\ve v_0 \upto \ve v_n$ in an $n$-dimensional
Euclidean space $V$ not lying in a proper affine subspace (i.e.,
$V = \Aff(\ve v_0 \upto \ve v_n)$), then by \cref{pCM}\ref{pCMb} any
$\ve w \in V$ can determined from the distances
$\lVert\ve w - \ve v_i\rVert$. Given several such
points~$\ve w_1 \upto \ve w_m$, their mutual distances
$\lVert\ve w_i - \ve w_j\rVert$ can then be worked out. The following
result allows the computation of the mutual distances in a direct way,
and without any knowledge of the~$\ve v_i$; only the distances between
them are needed.

\begin{prop} \label{pDistances}%
  Let $\ve v_0 \upto \ve v_n \in V$ be points in an $n$-dimensional
  Euclidean space that do not lie in a proper affine subspace, and let
  $C \in \RR^{(n+2) \times (n+2)}$
  be their Cayley-Menger matrix. Let $\ve w_1 \upto \ve w_m \in V$ be
  further points and form the matrix
  $\Delta \in \RR^{(n + 2) \times m}$ whose $i$-th column is
  $\bigl(1,\lVert\ve w_i - \ve v_0\rVert^2 \upto \lVert\ve w_i - \ve
  v_n\rVert^2)^T$. Then we have
  \begin{equation} \label{eqDistances}%
    \begin{pmatrix}
      0 & \lVert\ve w_1 - \ve w_2\rVert^2 & \lVert\ve w_1 - \ve
      w_3\rVert^2 & \cdots & \lVert\ve w_1 - \ve w_m\rVert^2 \\
      \lVert\ve w_2 - \ve w_1\rVert^2 & 0 & \lVert\ve w_2 - \ve
      w_3\rVert^2 & \cdots & \lVert\ve w_2 - \ve w_m\rVert^2 \\
      \lVert\ve w_3 - \ve w_1\rVert^2 & \lVert\ve w_3 - \ve
      w_2\rVert^2  & 0 & \cdots & \lVert\ve w_3 - \ve w_m\rVert^2 \\
      \vdots & \vdots & \vdots & \ddots & \vdots \\
      \lVert\ve w_m - \ve w_1\rVert^2 & \lVert\ve w_m - \ve
      w_2\rVert^2 & \lVert\ve w_m - \ve w_2\rVert^2 & \cdots & 0
    \end{pmatrix} = \Delta^T C^{-1} \Delta.
  \end{equation}
\end{prop}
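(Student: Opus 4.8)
The plan is to reduce everything to barycentric coordinates with respect to the full‑dimensional simplex $\ve v_0 \upto \ve v_n$. Since these points do not lie in a proper affine subspace, \cref{pCM}\ref{pCMa} gives that the Cayley-Menger matrix~$C$ has rank $n+2$, hence is invertible; and every point of $V$, in particular each~$\ve w_i$, has a unique barycentric coordinate vector $\ve\beta_i \in \RR^{n+1}$ with $\ve w_i = \sum_{k=0}^n \beta_{i,k}\ve v_k$ and $\mathbf 1^T \ve\beta_i = 1$, where $\mathbf 1 := (1 \upto 1)^T$. I will carry the relevant distance information in the symmetric matrix $D := \bigl(\lVert\ve v_k - \ve v_\ell\rVert^2\bigr)_{k,\ell = 0 \upto n}$, which is the lower right block of~$C$.

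First I would record the elementary distance identity: for points $\ve w,\ve w'$ with barycentric coordinate vectors $\ve\beta,\ve\beta'$ one has $\lVert\ve w - \ve w'\rVert^2 = -\tfrac12 (\ve\beta - \ve\beta')^T D (\ve\beta - \ve\beta')$. This follows by expanding $\ve w - \ve w' = \sum_k (\beta_k - \beta'_k)\ve v_k$, substituting $D_{k,\ell} = \lVert\ve v_k\rVert^2 - 2\langle\ve v_k,\ve v_\ell\rangle + \lVert\ve v_\ell\rVert^2$, and using $\sum_k(\beta_k - \beta'_k) = 0$ to annihilate the terms involving only $\lVert\ve v_k\rVert^2$ or $\lVert\ve v_\ell\rVert^2$ (this is the same flavour of computation used to show $H = -2G$ in the proof of \cref{rQuadratic}). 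Specializing $\ve w' = \ve v_\ell$, whose barycentric vector is the $\ell$-th standard basis vector, and using $D_{\ell,\ell} = 0$, yields $\lVert\ve w_i - \ve v_\ell\rVert^2 = (D\ve\beta_i)_\ell - \tfrac12\,\ve\beta_i^T D\ve\beta_i$. Hence the $i$-th column of~$\Delta$ is $\bigl(1,\ D\ve\beta_i - \tfrac12 (\ve\beta_i^T D\ve\beta_i)\,\mathbf 1\bigr)^T$.

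The key step is to identify $C^{-1}\Delta$. Writing $C$ in block form $\left(\begin{smallmatrix} 0 & \mathbf 1^T \\ \mathbf 1 & D \end{smallmatrix}\right)$, a direct block multiplication shows that $C\,\bigl(-\tfrac12\,\ve\beta_i^T D\ve\beta_i,\ \ve\beta_i^T\bigr)^T$ is exactly the $i$-th column of~$\Delta$ just computed (the top coordinate comes out as $\mathbf 1^T\ve\beta_i = 1$, and the bottom block as $-\tfrac12(\ve\beta_i^T D\ve\beta_i)\mathbf 1 + D\ve\beta_i$). Since $C$ is invertible, the $i$-th column of $C^{-1}\Delta$ is therefore $\bigl(-\tfrac12\,\ve\beta_i^T D\ve\beta_i,\ \ve\beta_i^T\bigr)^T$. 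Then the $(i,j)$-entry of $\Delta^T C^{-1}\Delta$ is the $i$-th column of~$\Delta$ dotted with the $j$-th column of $C^{-1}\Delta$; carrying out this dot product and simplifying using $\mathbf 1^T\ve\beta_j = 1$ and $D^T = D$ collapses it to $-\tfrac12 (\ve\beta_i - \ve\beta_j)^T D (\ve\beta_i - \ve\beta_j)$, which by the identity of the previous paragraph equals $\lVert\ve w_i - \ve w_j\rVert^2$, and is $0$ when $i = j$. This is precisely~\cref{eqDistances}.

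I expect the only genuine obstacle to be the verification that $C^{-1}$ "undoes" the passage to barycentric coordinates — i.e.\ that $C\,\bigl(-\tfrac12\,\ve\beta_i^T D\ve\beta_i,\ \ve\beta_i^T\bigr)^T$ reproduces the $i$-th column of~$\Delta$; once this identification is in hand, the remaining steps are routine algebra. An alternative would be to invoke \cref{pCM}\ref{pCMb} directly to extract the barycentric coordinates of each~$\ve w_i$ and then compute mutual distances, but working with~$D$ and~$C$ as above seems more transparent here, since the statement is phrased in terms of the Cayley-Menger matrix rather than the Gram-type matrix~$I$.
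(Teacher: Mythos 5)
Your proof is correct, and it takes a genuinely different route from the paper's. The paper never touches barycentric coordinates: it forms the Cayley--Menger matrix $\tilde C$ of the augmented point list $\ve v_0 \upto \ve v_n, \ve w, \ve w'$, invokes \cref{pCM}\ref{pCMa} a second time to get $\rank(\tilde C) = \rank(C) = n+2$, and concludes that the two appended rows must be the \emph{same} linear combination of the first $n+2$ rows that expresses their left-hand blocks, namely the one given by right-multiplying by $C^{-1}$; reading off the bottom-right $2\times 2$ block of $\tilde C$ then yields $f = \lVert\ve w - \ve w'\rVert^2$ directly. Your argument instead computes everything explicitly: the identity $\lVert\ve w - \ve w'\rVert^2 = -\tfrac12(\ve\beta-\ve\beta')^T D(\ve\beta-\ve\beta')$, the block form $C = \bigl(\begin{smallmatrix} 0 & \mathbf 1^T \\ \mathbf 1 & D\end{smallmatrix}\bigr)$, and the verification that $C$ maps $\bigl(-\tfrac12\ve\beta_i^T D\ve\beta_i,\ \ve\beta_i^T\bigr)^T$ to the $i$-th column of $\Delta$ all check out, and the final bilinear-form collapse is routine. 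The paper's rank argument is shorter, coordinate-free, and transfers verbatim to the general quadratic-form setting of \cref{rQuadratic}; yours is more self-contained (it uses \cref{pCM}\ref{pCMa} only for the invertibility of $C$, not for the rank of the augmented matrix) and is arguably more informative, since it identifies $C^{-1}\Delta$ concretely as the barycentric coordinates of the $\ve w_i$ padded with a scalar, which dovetails with \cref{pCM}\ref{pCMb}. Either proof would be acceptable.
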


\begin{proof}
  By \cref{pCM}\ref{pCMa}, $C$ has rank $n+2$, so $C^{-1}$ exists. Let
  us first consider the case of two points $\ve w,\ve w' \in V$. For
  ease of notation write $d_i := \lVert\ve w - \ve v_i\rVert^2$,
  $e_i := \lVert\ve w' - \ve v_i\rVert^2$, and
  $f := \lVert\ve W - \ve w'\rVert^2$.  So with
  $D_{i,j} := \lVert\ve v_i - \ve v_j\rVert^2$, the Cayley-Menger
  matrix of the points $\ve v_0 \upto \ve v_n,\ve w,\ve w'$ is
  \[
    \tilde C = \begin{pmatrix}
      0 & 1 & 1 & \cdots & 1 & 1 & 1 \\
      1 & 0 & D_{0,1} & \cdots & D_{0,n} & d_0 & e_0\\
      1 & D_{1,0} & 0 & \cdots & D_{1,n}  & d_1 & e_1\\
      \vdots & \vdots & \vdots & \ddots & \vdots & \vdots &
      \vdots \\
      1 & D_{n,0} & D_{n,1} & \cdots & 0 & d_n & e_n \\
      1 & d_0 & d_1 & \cdots & d_n & 0 & f \\
      1 & e_0 & e_1 & \cdots & e_n & f & 0
    \end{pmatrix} = \left(
      \begin{array}{ccccc|cc}
        &&&&& 1 & 1 \\
        &&&&& d_0 & e_0 \\
        && C &&& d_1 & e_1 \\
        &&&&& \vdots & \vdots \\
        &&&&& d_n & e_n \\ \hline
        1 & d_0 & d_1 & \cdots & d_n & 0 & f \\
        1 & e_0 & e_1 & \cdots & e_n & f  & 0
      \end{array} \right).
  \]
  We have
  \[
    \begin{pmatrix}
      1 & d_0 & d_1 & \cdots & d_n \\
      1 & e_0 & e_1 & \cdots & e_n
    \end{pmatrix} = \left(\begin{pmatrix}
        1 & d_0 & d_1 & \cdots & d_n \\
        1 & e_0 & e_1 & \cdots & e_n
      \end{pmatrix} C^{-1}\right) \cdot C,
  \]
  which tells us how the rows in the block below $C$ can be written in
  a unique way as a linear combination of the rows of $C$.  Since
  $\rank(\tilde C) = \rank(C)$ (again by \cref{pCM}\ref{pCMa}), the
  same linear combination of the $n + 2$ upper rows must represent the
  two bottom rows. Therefore
  \[
    \begin{pmatrix}
      0 & f \\
      f & 0
    \end{pmatrix} = \begin{pmatrix}
      1 & d_0 & d_1 & \cdots & d_n \\
        1 & e_0 & e_1 & \cdots & e_n
    \end{pmatrix} C^{-1}
    \begin{pmatrix}
      1 & 1 \\
      d_0 & e_0 \\
      d_1 & e_1 \\
      \vdots & \vdots \\
      d_n & e_n
    \end{pmatrix}.
  \]
  We obtain
  \[
    \lVert\ve w - \ve w'\rVert^2 = f = \begin{pmatrix} 1 & d_0 & d_1 &
      \cdots & d_n\end{pmatrix} \cdot C^{-1}
    \cdot \begin{pmatrix} 1 \\ e_0 \\ e_1 \\ \vdots \\
      e_n\end{pmatrix} = \begin{pmatrix} 1 \\ \lVert\ve w - \ve
      v_0\rVert^2 \\ \lVert\ve w - \ve v_1\rVert^2 \\ \vdots \\
      \lVert\ve
      w - \ve v_n\rVert^2\end{pmatrix}^T C^{-1} \begin{pmatrix} 1 \\
      \lVert\ve w' - \ve v_0\rVert^2 \\ \lVert\ve w' - \ve v_1\rVert^2
      \\ \vdots \\
      \lVert\ve w' - \ve v_n\rVert^2\end{pmatrix}.
  \]
  Since this holds for any two points $\ve w$ and $\ve w'$, the
  equation~\cref{eqDistances} follows.
\end{proof}

\section{A Reconstruction Algorithm for the Vehicle Path and Source
  Position } \label{sAlgorithm}

In this section we present an algorithm that is run each time a
loudspeaker emits a sound signal as the vehicle moves along a path.
The vehicle, on which the data is acquired and the computations are
performed, is assumed to know when the signal is emitted. For example,
the vehicle and the loudspeaker might share a common clock and follow
a predetermined signal firing schedule, or they might be connected so
that the vehicle can tell the loudspeaker when to emit a
signal. However, the vehicle does not need to know the loudspeaker
position.

When possible, the algorithm computes the position of the vehicle at
the time where the sound is emitted by the loudspeaker. It also
computes the position of the sound sources (mirror points or loudspeaker)
that were heard by the four microphones. Over time, the list of
reconstructed sources grows to include more and more sources as they
are being discovered.

When for the first time at least four noncoplanar sound sources have been detected, their positions are stored relative to a coordinate system that is defined by the current location of the vehicle. This coordinate system will be frozen and used at all times. In each subsequent call, the algorithm seeks to match at least four noncoplanar detected sound sources with sound sources that have previously been detected. This enables the algorithm to determine the vehicle position, and express the positions of the newly detected sound sources in terms of the coordinate system that has been frozen before.
If desired, the frozen coordinate system can later be recalibrated to some other coordinate system.

\newcommand{\detd}{{\operatorname{detected}}}%
\newcommand{\know}{{\operatorname{known}}}

\begin{longalg}[Self-location and detecting environment geometry] \label{aLocate}%
  \mbox{}%
  \begin{description}
  \item[Input (optional)] A list of points
    $\ve s_1 \upto \ve s_n \in \RR^3$, which are known positions of 
    sound sources. 
    The $\ve s_i$ may not be coplanar, so in
    particular $n \ge 4$. The list of~$\ve s_i$ is either taken from
    previous runs of the algorithm or passed to it after preparing the
    room in the room-coordinates scenario (see above).

    In both scenarios it is assumed that the algorithm knows the
    coordinate vectors $\ve m_1 \upto \ve m_4 \in \RR^3$ of the
    positions of the microphones with respect to the coordinate system
    given by the principal axes (roll, pitch and yaw) of the vehicle.
  \item[Output] In case of success, the output consists of:
    \begin{enumerate}[label=(\alph*)]
    \item \label{Outa} an updated list of known positions of sound
      sources~$\ve s_i$, which can be used as input for the next call, and,
    \item \label{Outb} if input was provided: the present location of the vehicle,
      given by the position~$\ve v \in \RR^3$ of its center of mass
      and by a matrix in $A \in \Or_3(\RR)$ whose columns give the
      present directions of the principal axes. (If no input was provided,
      the output consists only of what is described in~\ref{Outa}.)
    \end{enumerate}
    If unsuccessful, the algorithm returns ``FAIL.'' In this case, the
    list of known sound sources from the last successful call
    remains unchanged and should be used for the next call.
  \end{description}
  \begin{enumerate}[label=(\arabic*)]
  \item \label{aLocate1} {\bf Data collection:} After a sound has been
    emitted by the loudspeaker, for each
    $i = 1 \upto 4$ record the signals from this sound and its
    first-order echoes as received by the $i$-th microphone. From the
    times of reception, calculate the distances travelled by the
    signals from emission to reception, and for each microphone
    collect the squares of these distances in a set $\mathcal D_i$.
  \item \label{aLocate2} {\bf Echo matching:} With
    $f_D(x_1,x_2,x_3,x_4)$ given by Equation~\cref{eqCMPolynomial}
    below, form the matrix $\Delta \in \RR^{4 \times m}$ whose columns
    are the $(d_1,d_2,d_3,d_4)^T$ such that
    $f_D(d_1,d_2,d_3,d_4) = 0$, where $(d_1 \upto d_4)$ ranges through
    the cartesian product
    $\mathcal D_1 \times \cdots \times \mathcal D_4$. Then the columns
    of $\Delta$ correspond to the detected sound sources, and
    in each column the $i$-th entry is the squared distance between
    the $i$-th microphone and the sound source.
  \item \label{aLocate3} {\bf Compute the distance matrix:} With
    $\Delta_{i,j}$ the entries of $\Delta$, form the matrix
    \[
      \overline\Delta := \left(
        \begin{array}{ccc}
          1 & \cdots & 1 \\
          \Delta_{1,1} & \cdots & \Delta_{1,m} \\
          \vdots &  & \vdots \\
          \Delta_{4,1} & \cdots & \Delta_{4,m} 
        \end{array}\right) \in \RR^{5 \times m}.
    \]
    With $C \in \RR^{5 \times 5}$ the Cayley-Menger matrix of
    the~$\ve m_i$ given in Equation~\cref{eqCMMatrix}, compute the
    matrix%
    \[
      D^\detd := \overline\Delta^T C^{-1} \overline\Delta \in \RR^{m \times
        m}.
    \]
    Then $D^\detd$ stores the squared distances between the detected
    sound sources.
  \item \label{aLocate4} {\bf Case distinctions:} If
    $\brank(D^\detd) < 3$ (see~\cref{eqBRank} for the definition of the bordered rank), 
    return ``FAIL'' and skip the remaining steps. The condition on the bordered rank
    means that the detected sound sources are coplanar.

    \noindent If no points~$\ve s_i$ have been passed as input to the
    algorithm, set $\ve b_i := \ve m_i$ ($i = 1 \upto 4$) and go to
    step~\ref{aLocate7}.
  \item \label{aLocate5} {\bf Submatrix Matching:} Form the matrix
    $D^\know \in \RR^{n \times n}$ with entries
    $\lVert\ve s_i - \ve s_j\rVert^2$.

    \noindent If possible, find indices
    $i_1 \upto i_4 \in \{1 \upto m\}$ and
    $j_1 \upto j_4 \in \{1 \upto n\}$ such that, with notation
    explained below,
    \[
      D^\detd_{i_1 \upto i_4} = D^\know_{j_1 \upto j_4}
    \]
    and such that $\brank(D^\detd_{i_1 \upto i_4}) = 3$. See
    \cref{aMatch} below for an efficient way to find the $i$'s
    and~$j$'s.
    
    \noindent If no such $i_1 \upto i_4$ and $j_1 \upto j_4$ exist,
    return ``FAIL'' and skip the remaining steps of the algorithm.

    \noindent If they do exist, the mutual distances between the known
    points $\ve s_{j_1} \upto \ve s_{j_4}$ are the same as the mutual
    distances between the detected points corresponding to the columns
    of $\Delta$ with numbers $i_1 \upto i_4$. The remaining steps work
    with the assumption that these detected points {\em are}
    $\ve s_{j_1} \upto \ve s_{j_4}$. Set $\ve b_i := \ve s_{j_i}$
    ($i = 1 \upto 4$).
  \item \label{aLocate6} {\bf Self-locating:} Set
    \[
      M := \left(
        \begin{array}{cccc}
          \ve m_1 & \ve m_2 & \ve m_3 & \ve m_4 \\
          \hline 1 & 1 & 1 & 1
        \end{array}
      \right) \in \RR^{4 \times 4}.
    \]
    and let $B \in \RR^{3 \times 4}$ be the upper $3 \times 4$-part of
    the transpose-inverse
    \[
      \left(
        \begin{array}{cccc}
          \ve b_1 & \ve b_2 & \ve b_3 & \ve b_4 \\
          \hline 1 & 1 & 1 & 1
        \end{array}
      \right)^{-T} \in \RR^{4 \times 4}.
    \]
    Compute
    \[
      \bigl(A\mid\ve v\bigr) := \frac{1}{2} B \cdot \left(\lVert\ve
        b_j\rVert^2 - \Delta_{k,i_j}\right)_{j,k = 1 \upto 4} \cdot
      M^{-1}
    \]
    with $A \in \RR^{3 \times 3}$ and $\ve v \in \RR^3$. Then, as we
    will see in the proof of \cref{tLocate}, $\ve v$ is the present
    position of the vehicle's center of mass, $A \in \Or_3(\RR)$, and
    the columns of $A$ give the present directions of its principal
    axes.

    \noindent To prepare for step~\ref{aLocate7}, set
    $\Delta_{j,k} := D^\detd_{i_j,k}$, the $(i_j,k)$-entry of
    $D^\detd$ ($j = 1 \upto 4$, $k = 1 \upto m$). This is the squared
    distance between~$\ve b_j$ and the $k$-th detected point.
    
    \noindent Return~$\ve v$ and $A$ (as the output described in~\ref{Outb}),
    and continue with step~\ref{aLocate7}.
  \item \label{aLocate7} {\bf Knowledge update:} With
    $B$ defined as in step~\ref{aLocate6},
    compute the points
    \[
      \ve t_k := \frac{1}{2} B \cdot
      \begin{pmatrix}
        \lVert\ve b_1\rVert^2 - \Delta_{1,k} \\ 
        \vdots \\
        \lVert\ve b_4\rVert^2 - \Delta_{4,k} 
      \end{pmatrix}
    \]
    ($k = 1 \upto m$). As shown in the proof of \cref{tLocate}, these
    are the detected sound sources. Update the list
    $\ve s_1 \upto \ve s_n$ by adding those $\ve t_k$ that are not
    equal to one of the $\ve s_i$.

    \noindent Return the updated list of~$\ve s_i$ as the output described in~\ref{Outa}.
  \end{enumerate}
\end{longalg}

In the following we explain some notation used in the algorithm and
make some remarks.

\begin{description}
\item[Step~\ref{aLocate2}] From the coordinate vectors $\ve m_i$ of
  the microphone positions, the algorithm can compute their squared
  mutual distances $D_{i,j} = \lVert\ve m_i - \ve m_j\rVert^2$. From
  these, it can form the Cayley-Menger matrix $C \in \RR^{5 \times 5}$
  according to~\cref{eqCMMatrix} and the \df{Cayley-Menger
    polynomial}
  \begin{equation} \label{eqCMPolynomial}%
    f_D(x_1,x_2,x_3,x_4) = \det
    \left(
      \begin{array}{ccccc|cc}
        &&&&& 1 \\
        &&&&& x_1 \\
        && C &&& x_2 & \\
        &&&&& x_3 \\
        &&&&& x_4 \\ \hline
        1 & x_1 & x_2 & x_3 & x_4 & 0
      \end{array}
    \right).
  \end{equation}
\item[Step~\ref{aLocate5}] If $i_1 \upto i_r \in \{1 \upto m\}$, we write
  $D_{i_1 \upto i_r} \in \RR^{r \times r}$ is the submatrix obtained
  by selecting the rows and columns with indices $i_1 \upto i_r$, or
  more formally
  $D_{i_1 \upto i_r} := \bigl(d_{i_j,i_k}\bigr)_{j,k = 1 \upto r}$.
\item[Step~\ref{aLocate6}] From the matrix
  $A = \left(\begin{smallmatrix} a_{1,1} & a_{1,2} & a_{1,3} \\
      a_{2,1} & a_{2,2} & a_{2,3} \\ a_{3,1} & a_{3,2} &
      a_{3,3} \end{smallmatrix}\right)$, the present yaw, pitch and
  roll angles~$\alpha$, $\beta$ and~$\gamma$, respectively, can be
  easily determined by the well-known formulas%
  \newcommand{\atan}{\operatorname{atan2}}
  \[
    \alpha = \atan(a_{2,1},a_{1,1}), \quad \beta = -
    \arcsin(a_{3,1}) \quad \text{and} \quad \gamma =
    \atan(a_{3,2},a_{3,3})
  \]
  (if $|a_{3,1}| \ne 1$; otherwise $\beta = - a_{3,1} \cdot \pi/2$,
  $\gamma = 0$ and $\alpha = \atan(-a_{1,2},a_{2,2})$ is a non-unique
  solution).  There is a small caveat: for these formulas to give the
  expected values, the coordinate system used by the algorithm needs
  to be oriented as the vehicle-coordinate system (right- or
  left-handed), and its $z$-axis needs to point up or down in
  accordance with the vehicle's yaw axis.

 \item[Step~\ref{aLocate7}] Some heuristics may be applied to only add
  such $\ve t_k$ into the list that are ``sufficiently far away'' from
  points already in the list. Moreover, if the list grows so long that
  it renders step~\ref{aLocate5} inefficient, points may be deleted
  from the list, as long as the points still in the list do not become
  coplanar. But since \cref{aMatch} is generically only quadratic
  in~$n$, this should be unlikely to happen.
\end{description}

\begin{theorem} \label{tLocate}%
  \cref{aLocate} is correct under the following assumptions:
  \begin{enumerate}[label=(\alph*)]
  \item \label{tLocateA} No ghost walls are detected.
  \item \label{tLocateB} For sound sources
    $\ve s_1 \upto \ve s_4,\ve s'_1 \upto \ve s'_4$ such that the
    $\ve s_i$ are not coplanar, the condition that
    $\lVert\ve s_i - \ve s_j\rVert = \lVert\ve s'_i - \ve s'_j\rVert$
    for $1 \le i < j \le 4$ implies that $\ve s_i = \ve s'_i$ for
    all~$i$.
  \end{enumerate}
\end{theorem}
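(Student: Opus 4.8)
The plan is to verify correctness of \cref{aLocate} by tracking what each step computes and showing, under assumptions \ref{tLocateA} and \ref{tLocateB}, that the claimed geometric quantities are produced. Assumption \ref{tLocateA} guarantees that every column of $\Delta$ produced in step~\ref{aLocate2} genuinely corresponds to a sound source: by the ray acoustic model, a microphone at $\ve m_k$ hears a sound source at distance $d$ with $d^2$ a root of the Cayley-Menger polynomial $f_D$, because the source together with the four microphones has a degenerate Cayley-Menger matrix (they live in $\RR^3$, so the matrix of size $6\times6$ has rank at most $5$, i.e.\ determinant zero). Conversely, since the $\ve m_i$ are non-coplanar, \cref{pDistances} applies: for a point $\ve w$ at squared distances $d_1 \upto d_4$ from the microphones, the value $f_D(d_1,d_2,d_3,d_4)=0$ forces $\ve w$ to be the unique point determined by those distances via \cref{pCM}\ref{pCMb}. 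So the columns of $\Delta$ are exactly the tuples of squared microphone-to-source distances, and (under \ref{tLocateA}) there are no spurious ones.

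Next I would justify step~\ref{aLocate3}: the matrix $D^\detd = \overline\Delta^T C^{-1}\overline\Delta$ equals the matrix of squared mutual distances between the detected sources. This is a direct application of \cref{pDistances} with $\ve v_0\upto\ve v_n$ the four microphones ($n=3$) and the $\ve w_i$ the detected sources — the matrix $\overline\Delta$ is precisely the $\Delta$ of that proposition. The bordered-rank test in step~\ref{aLocate4} then detects coplanarity of the sources by \cref{pCM}\ref{pCMa}: $\brank(D^\detd) = \dim\Aff(\text{sources})$, so $\brank < 3$ means the sources lie in a plane, in which case localization from them is impossible and returning ``FAIL'' is correct. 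For step~\ref{aLocate5}, matching a $4\times4$ submatrix of $D^\detd$ of bordered rank $3$ with a submatrix of $D^\know$ means four non-coplanar detected sources have the same mutual distances as four known sources $\ve s_{j_1}\upto\ve s_{j_4}$; assumption \ref{tLocateB} is exactly what guarantees that the detected sources then literally coincide with these known points, so identifying them is legitimate.

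For the self-locating computation in step~\ref{aLocate6}, the key algebraic identity is this: if $\ve b_1\upto\ve b_4$ are the (now known) source positions and the vehicle has center of mass $\ve v$ and orientation $A\in\Or_3(\RR)$, then the $k$-th microphone sits at $A\ve m_k + \ve v$, so
\[
  \lVert\ve b_j - (A\ve m_k+\ve v)\rVert^2 = \lVert\ve b_j\rVert^2 - 2\ve b_j^T(A\ve m_k+\ve v) + \lVert A\ve m_k+\ve v\rVert^2.
\]
Writing $\Delta_{k,i_j}$ for the left side, one checks that $\lVert\ve b_j\rVert^2 - \Delta_{k,i_j} = 2\ve b_j^T(A\ve m_k+\ve v) - \lVert A\ve m_k + \ve v\rVert^2$, and after augmenting the $\ve b_j$ and $\ve m_k$ with a row of ones and using the inverses defined in the step, the quadratic terms cancel and one is left with $(A\mid\ve v)$ exactly as the displayed formula produces; that $A\in\Or_3(\RR)$ follows because $A$ must be the orthogonal part of the rigid motion, which is forced since the $\ve b_j$ and the $\ve m_k$ are both non-degenerate $4$-point configurations with equal internal Gram data. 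Step~\ref{aLocate7} is then the same linear algebra applied to all detected points rather than just the four chosen ones, recovering their absolute positions $\ve t_k$ in the frozen coordinate system. The main obstacle I expect is not any single deep step but the bookkeeping in step~\ref{aLocate6}: carefully setting up the augmented matrices, tracking which index is a microphone index ($k$) and which is a source index ($i_j$), and verifying that the $\frac12 B \cdot (\cdots) \cdot M^{-1}$ product really does isolate $(A\mid\ve v)$ with all cross terms $\lVert A\ve m_k+\ve v\rVert^2$ absorbed correctly — this is where a sign error or a transpose slip would invalidate the argument, so it deserves the most care.
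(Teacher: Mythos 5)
Your proposal follows essentially the same route as the paper's proof: assumption \ref{tLocateA} for step~\ref{aLocate2}, \cref{pDistances} for step~\ref{aLocate3}, \cref{pCM}\ref{pCMa} for the coplanarity test, assumption \ref{tLocateB} for the matching step, and the augmented-matrix identity recovering a point from its squared distances to four non-coplanar reference points for steps~\ref{aLocate6} and~\ref{aLocate7}. The only things you gloss over, neither of which changes the argument, are the remark that the Cayley--Menger matrix built from the known coordinate vectors $\ve m_i$ may be used in place of the unknown current microphone positions because rigid motion preserves their mutual distances, and the bookkeeping showing that all outputs are expressed in the single coordinate system frozen at the first successful call.
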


So due to~[\citenumber{Boutin:Kemper:2019},\citenumber{Boutin:Kemper:2022}],
the hypothesis~\ref{tLocateA} is satisfied for almost all vehicle positions, 
and due to \cref{tSymmetries3D}, the hypothesis~\ref{tLocateB} is satisfied for
almost all loudspeaker positions.

\begin{proof}
  The correctness of step~\ref{aLocate2} is the very definition of
  ``no ghost walls.'' Step~\ref{aLocate3} is correct because of
  \cref{pDistances}, but there is one subtlety to observe:
  $\Delta_{i,j}$ is the squared distance between the $j$-th detected
  source and the (unknown) position $\tilde{\ve m}_i$ of the
  $i$-th microphone at the time when the algorithm was called, so
  \cref{pDistances} has to be used with the $\tilde{\ve m}_i$ instead
  of the time-independent (and known) coordinate vectors~$\ve
  m_i$. But since the $\ve m_i$ are coordinate vectors with respect to
  a cartesian coordinate system, we always have
  $\lVert\tilde{\ve m}_i - \tilde{\ve m}_j\rVert = \lVert\ve m_i - \ve
  m_j\rVert$, so setting up the matrix $C$ with the $\ve m_i$ instead
  of the $\tilde{\ve m}_i$ does yield the correct result.

  In step~\ref{aLocate4}, the first condition guarantees that the
  algorithm only proceeds if the detected sound sources are
  not coplanar (this follows from \cref{pCM}\ref{pCMa}), and in particular
  there are at least four of them. The
  second ``If''-statement applies if the algorithm was called without
  input, which can only happen in the first call in which non-coplanar
  sources were detected. The correctness of the assumption made
  in step~\ref{aLocate5} is guaranteed by assumption~\ref{tLocateB} of
  the theorem.

  The main part of the proof concerns steps~\ref{aLocate6}
  and~\ref{aLocate7}. Both use ``reference points''
  $\ve b_1 \upto \ve b_4$. For any vector $\ve w \in \RR^3$ we have
  \[
    \left(
      \begin{array}{cccc}
        \ve b_1 & \ve b_2 & \ve b_3 & \ve b_4 \\
        \hline 1 & 1 & 1 & 1
      \end{array}
    \right)^T \left(
      \begin{array}{c}
        \\ 2 \ve w \\ \\ \hline -\lVert\ve w\rVert^2
      \end{array}
    \right) =
    \begin{pmatrix}
      2 \langle\ve b_1,\ve w\rangle - \lVert\ve w\rVert^2 \\
      \vdots \\
      2 \langle\ve b_4,\ve w\rangle - \lVert\ve w\rVert^2
    \end{pmatrix} =
    \begin{pmatrix}
      \lVert\ve b_1\rVert^2 - \lVert\ve b_1 - \ve w\rVert^2 \\
      \vdots \\
      \lVert\ve b_4\rVert^2 - \lVert\ve b_4 - \ve w\rVert^2
    \end{pmatrix}.
  \]
  The~$\ve b_j$ used in the algorithm are not coplanar, which means
  that the matrix on the left is invertible. So if
  $B \in \RR^{3 \times 4}$ is as in steps~\ref{aLocate6}
  and~\ref{aLocate7} and if
  $\Delta_l := \lVert\ve b_l - \ve w\rVert^2$, then
  \begin{equation} \label{eqNail}%
    \ve w = \frac{1}{2} B \cdot
    \begin{pmatrix}
      \lVert\ve b_1\rVert^2 - \Delta_1 \\
      \vdots \\
      \lVert\ve b_4\rVert^2 - \Delta_4
    \end{pmatrix}.
  \end{equation}
  Let us write $\ve t_1 \upto \ve t_m$ for the positions of the
  detected sound sources (which step~\ref{aLocate7} seeks to
  work out). So $\ve t_{i_l} = \ve s_{j_l}$ for $l = 1 \upto 4$ by the
  assumption made in step~\ref{aLocate5}.

  Now step~\ref{aLocate7} can be reached directly from
  step~\ref{aLocate4}, or from step~\ref{aLocate6}. In the first case
  we have $\ve b_l = \ve m_l$ and
  $\Delta_{l,k} = \lVert\ve m_l - \ve t_k\rVert^2 = \lVert\ve b_l -
  \ve t_k\rVert^2$ (from step~\ref{aLocate2}), and in the second case
  $\ve b_l = \ve s_{j_l} = \ve t_{i_l}$ (from step~\ref{aLocate5}) and
  $\Delta_{l,k} = D^\detd_{i_l,k} = \lVert\ve t_{i_l} - \ve
  t_k\rVert^2 = \lVert\ve b_l - \ve t_k\rVert^2$. So in both cases the
  formula for~$\ve t_k$ in step~\ref{aLocate7} is correct
  by~\cref{eqNail}. Notice that in the first case (which happens in
  the case of zero input, and only in the first successful call of the
  algorithm), the points $\ve b_l = \ve m_l$ are represented according
  to the coordinate system given by the principal axes of the vehicle
  {\em at the time when the algorithm was called}. Therefore
  the~$\ve t_k$ are also represented according to this coordinate
  system. These are fed back into the algorithm in subsequent calls
  and serve, together with sound sources detected later, as
  reference points. It follows that all detected sound sources
  will be given according to this coordinate system. Being virtual
  sound sources, they remain fixed even as the vehicle moves on, so
  the coordinate system is also fixed once and for all. If, on the
  other hand, some input is given to the initial call of the
  algorithm, then the coordinate system according to which the input
  is given remains unchanged throughout.

  Step~\ref{aLocate6} is always called with reference points
  $\ve b_l = \ve s_{j_l} = t_{i_l}$, which is given according to the
  permanently chosen coordinate system. The $\Delta_{k,i_l}$ come from
  step~\ref{aLocate2}, so they are the squared distance between
  $\ve t_{i_l} = \ve b_l$ and the~$k$-th microphone {\em at the time
    when the algorithm was called}, which we write as
  $\tilde{\ve m}_k$ as before. So
  $\Delta_{k,i_l} = \lVert\ve b_l - \tilde{\ve m}_k\rVert^2$,
  and~\cref{eqNail} shows that
  \begin{equation} \label{eqTilde}%
    \frac{1}{2} B \cdot \left(\lVert\ve
      b_l\rVert^2 - \Delta_{k,i_l}\right)_{l,k = 1 \upto 4} =
    \bigl(\tilde{\ve m}_1 \mid \cdots \mid \tilde{\ve m}_4\bigr).
  \end{equation}
  Now in contrast to the~$\tilde{\ve m}_k$, the~$\ve m_k$ are the
  coordinate vectors of the microphone positions with respect to the
  principal axes of the vehicle. Since the microphones are mounted on
  the vehicle, these coordinate vectors remain constant, so in
  particular they apply to the present position of the
  microphones. Since the origin of the vehicle-coordinate system
  is~$\ve v$, the center of mass, this means that
  $\tilde{\ve m}_k - \ve v$ is a linear combination of the unit
  vectors ~$\ve x$, $\ve y$ and~$\ve z$ defining the present
  directions of the principal axis, with the coefficients of the
  linear combination given by the components of~$\ve m_k$. With
  $A := \bigl(\ve x \mid \ve y \mid \ve z\bigr)$, we can write this as
  $\tilde{\ve m}_k = A \cdot \ve m_k + \ve v$, or in matrix form
  $\bigl(\tilde{\ve m}_1 \mid \cdots \mid \tilde{\ve m}_4\bigr) =
  \bigl(A \mid \ve v\bigr) \cdot M$ with $M$ as defined in
  step~\ref{aLocate6}. Combining this with~\cref{eqTilde} shows that
  the formula for $\bigl(A \mid \ve v\bigr)$ in step~\ref{aLocate6} is
  correct. Since~$\ve x$, $\ve y$ and $\ve z$ are perpendicular unit
  vectors, $A \in \Or_3(\RR)$ follows.
\end{proof}


The following algorithm is a ``subroutine'' of \cref{aLocate}. We
formulate it in a slightly more general form.

\begin{longalg}[Find matching submatrices] \label{aMatch}%
  \mbox{}%
  \begin{description}
  \item[Input] Two symmetric matrices
    $A = (a_{i,j}) \in \RR^{m \times m}$ and
    $B = (b_{i,j}) \in \RR^{n \times n}$, and an integer~$r$ with
    $1 \le r \le \min\{m,n\}$.
  \item[Output] Integers $i_1 \upto i_r \in \{1 \upto m\}$ and
    $j_1 \upto j_r \in \{1 \upto n\}$ with the $i_\nu < i_\mu$ and the
    $j_\nu \ne j_\mu$ for $\nu < \mu$, such that
    $A_{i_1 \upto i_r} = B_{j_1 \upto j_r}$ and
    $\brank\bigl(A_{i_1 \upto i_r}\bigr) = r-1$, or ``FAIL'' if no
    such $i_\nu$ and $j_\nu$ exist.
  \end{description}
  \begin{enumerate}[label=(\arabic*)]
  \item \label{aMatch1} Set $k := 1$ and $i_1 := j_1 := 1$.
  \item \label{aMatch2} WHILE $k \le r$ DO
    \begin{enumerate}[label=(\arabic*)]
      \setcounter{enumii}{\arabic{enumi}}
    \item \label{aMatch3} IF $j_k \notin \{n+1,j_1 \upto j_{k-1}\}$,
      $a_{i_k,i_\nu} = b_{j_k,j_\nu}$ for $1 \le \nu \le k$, and
      $\brank\bigl(A_{i_1 \upto i_k}\bigr) = k-1$, THEN set
      $i_{k+1} := i_k + 1$, $j_{k+1} := 1$ and $k := k + 1$.
    \item \label{aMatch4} ELSE IF $j_k < n$, THEN set $j_k := j_k + 1$.
    \item \label{aMatch5} ELSE IF $i_k < m - r + k$, THEN set
      $i_k := i_k + 1$ and $j_k :=1$.
    \item \label{aMatch6} ELSE IF $k > 1$, THEN set
      $j_{k-1} := j_{k-1} + 1$ and $k := k-1$.
    \item \label{aMatch7} ELSE Return ``FAIL''.
    \item \label{aMatch8} END IF
    \end{enumerate}
  \item \label{aMatch9} END WHILE
  \item \label{aMatch10} Return $i_1 \upto i_r$ and $j_1 \upto j_r$.
  \end{enumerate}
\end{longalg}

The condition on the bordered rank is not an essential part of the algorithm.
For other possible applications of the algorithm, this condition can be replaced
by any other condition of interest, or omitted altogether.

\begin{theorem} \label{tMatch}%
  \cref{aMatch} terminates after finitely many steps and is
  correct. Moreover, if the $b_{i,j}$ for $1 \le i < j \le n$ are
  pairwise distinct, then the algorithm requires
  $\mathcal O(m^r n + m^2 n^2)$ operations of real numbers (almost all
  of them comparisons) for fixed~$r$.
\end{theorem}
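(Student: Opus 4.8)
The plan is to analyze \cref{aMatch} as a backtracking tree search and establish three things: termination, correctness, and the complexity bound. For termination and correctness, I would think of the algorithm as traversing a search tree whose nodes at depth~$k$ are partial assignments $(i_1 \upto i_k; j_1 \upto j_k)$. First I would argue that the state $(k, i_1 \upto i_k, j_1 \upto j_k)$ strictly advances in a well-founded order: each execution of the WHILE body either descends (step~\ref{aMatch3}, $k$ increases), or advances the current $j_k$ (step~\ref{aMatch4}), or advances $i_k$ and resets $j_k$ (step~\ref{aMatch5}), or backtracks (step~\ref{aMatch6}, $k$ decreases but $j_{k-1}$ advances), or halts (step~\ref{aMatch7}). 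Associating to each state the lexicographically-ordered tuple counting, from the bottom level up, how many $(i_\nu, j_\nu)$ pairs remain unexplored, one sees this tuple strictly decreases on every branch except the descent in~\ref{aMatch3}; since a descent can only be followed by finitely many steps before either halting or incrementing something at a shallower level (bounded because $i_k \le m-r+k$ and $j_k \le n$), the process terminates.

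For correctness, I would observe that the search is exhaustive over the set of candidate tuples with $i_1 < \cdots < i_r$ (enforced by setting $i_{k+1} := i_k+1$ and the bound $i_k < m-r+k$) and distinct $j_1 \upto j_r$ (enforced by the test $j_k \notin \{n+1, j_1 \upto j_{k-1}\}$ in step~\ref{aMatch3}), and that the algorithm commits to a deeper level only when the partial match conditions $a_{i_k, i_\nu} = b_{j_k, j_\nu}$ for $\nu \le k$ and $\brank(A_{i_1 \upto i_k}) = k-1$ both hold. By symmetry of $A$ and $B$, the accumulated equalities at depth~$r$ are exactly $A_{i_1 \upto i_r} = B_{j_1 \upto j_r}$, and the bordered-rank condition at depth~$r$ is exactly the required $\brank(A_{i_1 \upto i_r}) = r-1$. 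Conversely, if a valid output tuple exists, the exhaustive nature of the backtracking guarantees it is found before ``FAIL'' is returned; I would make this precise by noting that ``FAIL'' is reached only after the lexicographically first index $i_1$ has been exhausted with $k=1$, i.e. all tuples have been visited. One subtlety worth spelling out: the early bordered-rank pruning at intermediate depths does not lose solutions, because $\brank$ is monotone, so $\brank(A_{i_1 \upto i_r}) = r-1$ forces $\brank(A_{i_1 \upto i_k}) = k-1$ for all $k \le r$ — hence any full solution survives every intermediate pruning.

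For the complexity bound, I would count the number of times the inner checks in step~\ref{aMatch3} are performed. The key is the hypothesis that the off-diagonal entries $b_{i,j}$ ($i<j$) are pairwise distinct: at depth $k \ge 2$, once $j_1$ is fixed, the requirement $a_{i_k, i_1} = b_{j_k, j_1}$ pins down $j_k$ uniquely (generically), so each partial index tuple in the first coordinate, $(i_1 \upto i_k)$, together with the choice of $j_1$, admits essentially one continuation. Thus the search tree has $\mathcal O(m^r)$ nodes coming from the $i$-indices times $\mathcal O(n)$ for the choice of $j_1$, i.e. $\mathcal O(m^r n)$ nodes, and each node costs $\mathcal O(1)$ comparisons for the entry checks plus the incremental bordered-rank update — which, maintained incrementally via the previous level's data, is $\mathcal O(1)$ amortized or at worst absorbed into a fixed-$r$ constant. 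The additive $m^2 n^2$ term I would attribute to the genuinely worst-case scanning of $j_k$ through all $n$ values when the distinctness of the $b_{i,j}$ fails to immediately rule out candidates, bounded crudely by the total number of (state, $j_k$-increment) events, which is $\mathcal O(m^2 n^2)$ when $r$ is treated as fixed.

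The main obstacle I expect is the complexity argument rather than termination or correctness: making rigorous the claim that pairwise-distinctness of the $b_{i,j}$ collapses the branching factor at depths $\ge 2$ from $n$ to $1$ requires care about the word ``generically'' and about exactly which comparison pins down $j_k$ (it is the comparison against $j_1$, which is available as soon as $k \ge 2$), and about what happens on the exceptional configurations where distinctness is violated — the honest statement is the hybrid bound $\mathcal O(m^r n + m^2 n^2)$, and I would need to verify that the second term really does dominate the cost of all the ``wasted'' scanning and backtracking in the bad cases. I would also double-check the edge behavior at $k=1$, where there is no $j_1$ to compare against, so the first level legitimately costs $\mathcal O(mn)$ on its own; this is subsumed in both terms.
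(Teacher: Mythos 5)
Your architecture --- backtracking search, termination via a well-founded order on states, correctness by showing the pruning tests are passed by every prefix of a valid solution, and a complexity count driven by the distinctness of the $b_{i,j}$ --- is the same as the paper's. The genuine gap is in the complexity bookkeeping, precisely the part you flag as the obstacle. A surviving node at depth $k$ does \emph{not} cost $\mathcal O(1)$ comparisons: the algorithm locates the (at most one) admissible $j_k$ by scanning $j_k = 1,\dots,n$, so each surviving prefix costs $\Theta(n)$ comparisons at the next level. If, as you propose, there really were $\mathcal O(m^k n)$ surviving nodes at each depth $k\ge 2$ (``$i$-tuples times the free choice of $j_1$''), the scan at depth $k+1$ would cost $\mathcal O(m^{k+1} n^2)$, which for $k+1=r\ge 3$ exceeds the claimed bound; your two inaccuracies do not rigorously cancel. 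What actually saves the count --- and what your argument is missing --- is that $j_1$ ceases to be a free parameter after depth $2$: by distinctness, the single comparison $b_{j_1,j_2}=a_{i_1,i_2}$ determines the \emph{set} $\{j_1,j_2\}$, so for each $(i_1,i_2)$ only two ordered pairs $(j_1,j_2)$ survive depth $2$ (hence $\mathcal O(m^2)$, not $\mathcal O(m^2 n)$, surviving nodes), and from depth $3$ on the intersection $\{j_1,j_2\}\cap\{j_1,j_3\}=\{j_1\}$ pins down all of $j_1,\dots,j_{k-1}$ uniquely. The level-by-level totals are then $mn$ at depth $1$, $\mathcal O(m^2n^2)$ at depth $2$ (this term is the unavoidable cost of the level where neither $j_1$ nor $j_2$ is yet constrained --- it has nothing to do with ``distinctness failing,'' which is excluded by hypothesis), and $\mathcal O(m^k n)$ for $3\le k\le r$; summing gives the theorem.

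Two smaller soft spots. Your termination variant admittedly does not decrease on the descent in step~\ref{aMatch3}, and the patch (``a descent is followed by finitely many steps before something shallower increments'') is essentially what needs proving, since descents can follow descents; the clean fix is to order states so that a proper extension counts as strictly \emph{larger} (lexicographic order ``as in a lexicon''), after which every one of steps \ref{aMatch3}--\ref{aMatch6} strictly increases a bounded quantity. Second, the claim that $\brank$ is monotone is false for arbitrary symmetric matrices --- e.g.\ $A=\left(\begin{smallmatrix}0&0&1\\0&0&2\\1&2&0\end{smallmatrix}\right)$ has $\brank(A)=2$ while $\brank(A_{1,2})=0$ --- so the assertion that intermediate rank-pruning loses no solutions needs the fact that in the application $A$ is the squared-distance matrix of a point set, for which $\brank$ equals the affine dimension and heredity does hold. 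You are right to surface this point (the paper's own correctness argument uses the same heredity tacitly), but it cannot be obtained from abstract monotonicity of $\brank$.
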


\begin{proof}
  We compare tuples $(i_1,j_1,i_2,j_2 \upto i_k,j_k)$ of different
  lengths lexicographically with the additional rule that appending
  entries to a tuple makes it bigger, as in a real lexicon. So in all
  steps~\ref{aMatch3}--\ref{aMatch6}, the tuple is replaced by a
  strictly bigger one (which is made longer in step~\ref{aMatch3} and
  shorter in step~\ref{aMatch6}). Since the total length is bounded
  by~$r$ and the entries are bounded by $\max\{m,n+1\}$, this
  guarantees termination.

  To prove correctness, we claim that throughout the algorithm we have
  $a_{i_\nu,i_\mu} = b_{j_\nu,j_\mu}$ for $1 \le \nu,\mu < k$.  In
  fact, this is true for $k = 1$, and step~\ref{aMatch3} affords a
  proof by induction on~$k$. So if the algorithm terminates with
  returning integers~$i_\nu$ and~$j_\nu$, then $k = r + 1$ was
  reached, so indeed $A_{i_1 \upto i_r} = B_{j_1 \upto
    j_r}$. Moreover, the tuple $(i_1 \upto i_k)$ is increasing
  throughout the algorithm, and step~\ref{aMatch3} makes sure that the
  $j_\nu$ are pairwise distinct. The rank condition in that step
  provides the condition on the $\ve s_{i_\nu}$.

  Conversely, assume there exist integers
  $i_1' \upto i_r',j_1' \upto j_r'$ meeting the specifications of the
  algorithm. We claim that
  throughout the algorithm
  \begin{equation} \label{eqCompare}%
    (i_1',j_1' \upto i_k',j_k') \ge (i_1,j_1 \upto i_k,j_k)
  \end{equation}
  (comparing lexicographically). This is true after step~\ref{aMatch1}
  since $(i_1',j_1') \ge (1,1)$. Moreover, if the conditions in
  step~\ref{aMatch3} are satisfied, then also
  $(i_1',j_1' \upto i_{k+1}',j_{k+1}') \ge (i_1,j_1 \upto
  i_k,j_k,i_k+1,1)$, so~\cref{eqCompare} continues to hold. On the
  other hand, if at least one of the conditions in step~\ref{aMatch3}
  does not hold, this implies
  $(i_1',j_1' \upto i_k',j_k') \ne (i_1,j_1 \upto i_k,j_k)$, so we
  have~''$>$'' in~\cref{eqCompare}. Therefore
  $(i_1',j_1' \upto i_k',j_k') \ge (i_1,j_1 \upto i_k,j_k + 1)$. By
  the specifications of the algorithm, we have $j_k' \le n$ and
  $i_k' \le m - r + k$. So if $j_k \ge n$, then
  $(i_1',j_1' \upto i_k',j_k') \ge (i_1,j_1 \upto i_k + 1,1)$, and if
  in addition $i_k \ge m - r + k$, then even
  $(i_1',j_1' \upto i_{k-1}',j_{k-1}') \ge (i_1,j_1 \upto
  i_{k-1},j_{k-1} + 1)$. This shows that if the condition of any of
  the steps~\ref{aMatch4}--\ref{aMatch6} is satisfied,
  then~\cref{eqCompare} continues to hold, so indeed it holds
  throughout. The argument also shows that if step~\ref{aMatch6} were
  reached with $k = 1$, then $(i_1',j_1') > (i_1,j_1)$ but
  $i_1 \ge m - r + 1 \ge i_1'$, and $j_1 \ge n \ge j_1'$, a
  contradiction. So if the specifications of the algorithm are
  satisfiable, the algorithm will not return ``FAIL.''

  Finally, let us consider the running time. For each $k = 1 \upto r$
  we give an upper bound for the number of comparisons (i.e., checks
  whether $a_{i_k,i_\nu} = b_{j_k,j_\nu}$ in step~\ref{aMatch3}) that
  occur during the entire run of the algorithm for this
  particular~$k$. For $k = 1$, there are at most $m \cdot n$
  comparisons, as $i_1$ and $j_1$ range. For $k = 2$, the upper bound
  is $\binom{m}{2} \cdot n (n-1) \cdot k \le m^2 n^2$. Now $k \ge 3$
  is only reached if $b_{j_1,j_2} = a_{i_1,i_2}$, and by the
  hypothesis on the distinctness of the entries of $B$, this condition
  determines the set $\{j_1,j_2\}$, so there are only two
  possibilities for the ordered pair $(j_1,j_2)$. Thus for $k = 3$
  only $j_3$ ranges freely, and we get
  $2 \cdot (n-2) \cdot \binom{m}{3} \cdot k \le m^3 n$ as
  an upper bound. Finally, for $k > 3$ we have
  $b_{j_1,j_\nu} = a_{i_1,i_\mu}$ for $\nu < k$, in particular for
  $\nu = 2,3$. This determines the sets $\{j_1,j_2\}$ and
  $\{j_1,j_3\}$ uniquely. Because of the distinctness of the $j_\nu$,
  $j_1$ is uniquely determined as the sole element in the
  intersection, and this means that every $j_\nu$ with $\nu < k$ is
  also uniquely determined. Therefore we obtain
  $(n - k + 1) \cdot \binom{m}{k} \cdot k \le (n-k+1) \cdot m^r$ as an
  upper bound. Summing over~$k$ shows that the total number of
  comparisons is in $\mathcal O(m^r n + m^2 n^2)$.

  Further operations of real numbers are required for the rank
  determination in step~\ref{aMatch3}. For a given~$k$, the matrix
  depends only on $i_1 \upto i_k$, so there are at most $\binom{m}{k}$
  rank determinations, each requiring at most $\mathcal O(k^3)$
  operations. Since $\binom{m}{k} k^3 \le m^r r^3$, the cost for all
  $k = 1 \upto r$ lies in $\mathcal O(m^r r^4)$, which, since~$r$ is
  considered as a constant, is subsumed in $\mathcal O(m^r n)$.
\end{proof}

\begin{rem} \label{rMatch}%
  A simpler method for the same purpose as \cref{aMatch} would be to
  just try all tuples of integers $(i_1 \upto i_r,j_1 \upto j_r)$ in
  the admissible range. This requires $\mathcal O(m^r n^r)$ operations
  in $\RR$. In our application we have $r = 4$, so as a function of
  $l = \max\{m,n\}$, our algorithm has running time $\mathcal O(l^5)$,
  compared to the simpler one with $\mathcal O(l^8)$. Moreover, the
  asymmetry in~$m$ and~$n$ of the running time is fortunate since we
  apply the algorithm in a situation where~$m$ is the number of
  detected sound sources and thus has no intrinsic growth as
  the vehicle travels, and where~$n$ is the number of known virtual
  sound sources, which can be expected to grow ever larger.
\end{rem}

\section*{Acknowledgements}
The authors thank the Simons Laufer Mathematical Sciences Institute (formerly the Mathematical Sciences Research Institute (MSRI)),  the Banff International Research Station for Mathematical Innovation and Discovery (BIRS) and the Centro Internazionale per la Ricerca Matematica (CIRM) in Trento for supporting their work. 

\bibliographystyle{mybibstyle} \bibliography{bib}

\end{document}